\newcommand{\skipitems}[1]{%
  \addtocounter{\@enumctr}{#1}%
}
\newtheorem{thm}{Theorem}[section]
\newtheorem{defi}{Definition}[section]
\newtheorem{lem}{Lemma}[section]
\newtheorem{rem}{Remark}[section]
\newenvironment{proofpers}
{\noindent\it\underline{Proof of Theorem \ref{teoremapersistencia}}:\rm}{\hfill$\Box$} 
\newenvironment{proofattr}
{\noindent\it\underline{Proof of Theorem \ref{atractividad}}:\rm}{\hfill$\Box$}
\newenvironment{proofext}
{\noindent\it\underline{Proof of Theorem \ref{extinciongeneral}}:\rm}{\hfill$\Box$} 
\newenvironment{proofperiod}
{\noindent\it\underline{Proof of Theorem \ref{existenciadesolucion}}:\rm}{\hfill$\Box$} 
\newenvironment{proofperext}
{\noindent\it\underline{Proof of Theorem \ref{extincionperiodico}}:\rm}{\hfill$\Box$}
\newcommand{\R}{\mathbb{R}} 
\newcommand{\N}{\mathbb{N}}
\title{A  discrete one-species chemostat model with delayed response in the growth and non-constant supply}
\date{}
\author{Pablo Amster and Mauro Rodriguez Cartabia}
\begin{document}

\maketitle

\begin{center}
Departamento de Matem\'atica, \\
Facultad de Ciencias Exactas y Naturales\\
Universidad de Buenos Aires
and 
IMAS - CONICET\\
Ciudad Universitaria, Pabell\'on I,
(1428) Buenos Aires, Argentina 
\end{center}
\begin{center}
 pamster@dm.uba.ar -- mrodriguezcartabia@dm.uba.ar
\end{center}


 \begin{abstract} 
 A non-autonomous discrete delayed system for a one-species chemostat based on an Ellermeyer model for the continuous case  is studied. 
 Conditions  for the persistence 
 or the extinction of the solutions are obtained 
respectively in terms of the lower and  upper Bohl exponents for a scalar linear equation associated to the problem. 
 Furthermore, the condition for persistence also implies the attractiveness, that is, the existence of a bounded solution that attracts all the others. 
 As a special case, when the nutrient supply is $\omega$-periodic, the picture is complete: the condition for persistence implies the existence of an attractive non-trivial $\omega$-periodic solution, while non-persistence implies extinction.

\medskip

 \textbf{Keywords}: {Discrete one-species chemostat model, Delay-difference equations, Persistence, Extinction, Periodic solutions}.
 \smallskip 
 
\textbf{MSC 2020}: {34K60, 39A23, 92D25}

\medskip 

 \textit{Article published in Communications in Nonlinear Science and Numerical Simulation,  https://doi.org/10.1016/j.cnsns.2025.109487}
 \end{abstract}

\section{Introduction}

The chemostat is an experimental device employed to cultivate microorganisms under controlled environmental conditions. This continuous culturing method is of great relevance in several fields. In biology, it serves as an idealized model for the analysis of microbial ecology, competition, and evolution, as it allows for the growth of cells at a steady state by constantly adding fresh nutrient medium while removing spent culture. From an engineering perspective, it constitutes a simple prototype for continuous bioprocesses, such as those used in industrial fermentation for the production of antibiotics, biofuels, among other biologics. 
In the standard model, it is assumed that  one or more species are cultivated and  that all the nutrients are in abundance, except for a single one, usually called \textit{limiting nutrient} or \textit{substrate}, which is pumped into the culture vessel at a rate $E$. The concentration of nutrient at the feed tank is given by a function $s^0$ and, in order to preserve the volume constant, the liquid containing substrate, biomass and possibly products derived from this biomass, such as insulin, is pumped out at the same rate $E$, often called \textit{washout rate}.  
The standard continuous model 
for a single-species chemostat reads 
 \begin{equation}\label{continuous model}
     \left\{
 \begin{array}{l}
     s'(t)= E(s^0(t)- s(t)) - p(s(t))x(t),\\
     x'(t) = (p(s(t)) - E)x(t),
 \end{array}
 \right. 
 \end{equation}
where the variable $s$ represents the nutrient, $x$  the biomass and the function 
$p:[0,+\infty)\to \R$ regulates the \textit{per-capita} growth and consumption of the 
nutrient. A canonical example of such uptake function is the 
Monod or Michaelis-Menten function, namely
$$p(s):=\frac{p_{\max}s}{k_{s}+s} \quad \hbox{with $p_{\max}>E$ and $k_{s}>0$};
$$
more generally,  usual assumptions in the literature are 
that   $p$ is  bounded,  concave and nondecreasing, with $p(0)=0$. 
From the biological point of view, the last two assumptions are quite clear: 
   $p(0) = 0$ implies that nutrient consumption  cannot occur in the absence of the substrate, while
monotonicity   represents the fact that the uptake rate is directly proportional to the nutrient concentration.
On the other hand, the  boundedness of $p$ reflects the concept of saturation, i.e. there is a finite maximum rate $p_{max}$ preventing the uptake rate from unlimited growth. Finally, 
the concavity assumption is not strictly necessary but  mathematically convenient and also has a biological interpretation: it suggests that the efficiency of the uptake decreases as the concentration increases.    
Also, for convenience it is frequently assumed that $p$ is smooth enough, e.g. of class $C^1$. 

Model (\ref{continuous model}) and several extensions have been widely 
studied in the literature, see e.g. \cite{HRLS} and \cite{Smith-1995}.  
However, works considering the discrete dynamics are more scarce, although such a study
has a clear motivation regarding numerical simulations or discrete-time experiments.  
While classical descriptions of the chemostat are based on continuous 
differential equations, the difference equations models are relevant 
for several reasons: firstly, as mentioned, they are the natural framework for numerical simulation of these continuous systems and, secondly they are directly applicable to scenarios where measurements or experimental interventions occur at discrete time intervals.

A discrete size-structured version of the model was introduced in 
\cite{Gage-1984} and \cite{Smith-1996} for the autonomous case and treated recently in 
\cite{amsterdiscrete}, for the non-autonomous regime. In this latter work, by means of the Perron-Frobenius theorem, 
the problem was reduced to the study of 
the planar system 
\begin{equation}
\label{Planaire-00}
\left\{\begin{array}{rcl}
s_{t+1} &=&Es^0_{t} + (1-E)[s_{t}-x_{t}p(s_{t})],\\ 
x_{t+1} &=& (1-E)\left(1+p(s_{t}) \right)x_{t}, 
\end{array}\right.
\end{equation}
where
 $E\in (0,1)$ 
 and the uptake function $p:[0,+\infty)\to\R$ is of class $C^1$, bounded and satisfies 
 $$p(0)=0, \qquad 0< p'(\xi) \le p'(0),\quad \xi\ge 0. $$
 Also, it is assumed that  
 $s^0$ is bounded between two positive constants, which has a clear interpretation: on the one hand, 
 a positive lower bound ensures that the system is never devoid of the limiting nutrient, which would lead to the trivial outcome of washout and extinction of the microbial species. 
On the other hand, an upper bound reflects the finite solubility of the nutrient in the medium and 
the practical physical constraints of the experimental device.

A central role for the analysis of (\ref{Planaire-00})  is played by the function 
\begin{equation}\label{construccionz} 
z_{t+1}:=E\sum_{k=-\infty}^{t}(1-E)^{t-k}s^0_k,
\end{equation}
which is readily proven to be the unique bounded solution of the equation
\begin{equation}\label{eq: 4.4}
    z_{t+1}=(1-E)z_{t}+Es_{t}^0,\quad t\in \mathbb Z,
\end{equation}
and corresponds to the amount of nutrient in absence of biomass, leading to the so-called \textit{washout solution} $(z,0)$ of the system. 
It is verified that $z$ attracts exponentially all the other solutions of \eqref{eq: 4.4} as $t\to+\infty$ and, moreover,  that the condition
$$p'(0) z_{\sup}\le 1$$
with $z_{\sup}:=\displaystyle{\sup_{t\ge 0}z(t)}$ ensures that the solutions of (\ref{Planaire-00}) remain non-negative, provided 
that the initial conditions $(s_0,x_0)$ are non-negative and such that $s_0+x_0\le z_0$. 
Since $p'(\xi)\le p'(0)$ (which, in turn, is trivially satisfied if $p$ is concave), the preceding assumption 
simply says that
\begin{equation}\label{pz}
0<    p'(\xi) z_t\le 1,\qquad {\xi, t\ge 0}.
\end{equation}
This inequality, along with the natural condition $p(0)=0$, will constitute the only standing hypotheses on the $C^1$ map $p$ throughout this work.

In the previous context, sufficient conditions for the persistence or the extinction of the biomass can be expressed respectively in terms of the discrete  lower and upper Bohl exponents associated 
to the linear equation 
$$c_{t+1}= (1-E)(1+p(z_t))c_t.$$ 
According to the literature
\cite{Babiarz:15}, 
such exponents can be
defined by
$$
\underline{\beta}(p(z)):= 
\liminf_{t_1, t_2-t_1\to +\infty} 
\left( \prod_{k=t_1+1}^{t_2} [
(1-E)(1 + p(z_k))] \right)^{\frac 1{t_2-t_1}},$$
$$
\overline{\beta}(p(z)):= 
\limsup_{t_1, t_2-t_1\to +\infty} 
\left( \prod_{k=t_1+1}^{t_2} [
(1-E)(1 + p(z_k))] \right)^{\frac 1{t_2-t_1}}.$$
For the reader's convenience, 
let us recall  the meaning of the Bohl exponents associated to
a linear scalar nonautonomous equation. 
In the ODEs setting, consider the homogeneous equation 
\begin{equation}\label{homog}
    x'(t)=a(t)x(t)
\end{equation}
for 
some continuous function $a:[0,+\infty)\to \R$ and define 
$$\underline b:= \liminf_{s, t-s\to+\infty} \frac 1{t-s}\int_s^t a(\xi)\, d\xi, 
$$
$$\overline b:= \limsup_{s, t-s\to+\infty} \frac 1{t-s}\int_s^t a(\xi)\, d\xi.
$$
It is straightforward to see that if $\underline b> 0$ or $\overline b< 0$ then the solutions of (\ref{homog}) have an exponential behavior as $t\to+\infty$, respectively diverging or converging to $0$. Furthermore, for any continuous bounded function $c:[0,+\infty)\to \R$  the non-homogeneous equation 
$$x'(t)=a(t)x(t)+c(t)$$
has a unique solution defined for $t\ge 0$, which is bounded  in the first case, while all the solutions are bounded, in the second case. 
For mathematical convenience, when dealing with linear difference equations it is usual to consider, instead, the exponential of the previous quantities, which leads to the previous definitions of $\underline\beta$ and $\overline \beta$. 
 
As mentioned, the Bohl exponents proved to be a useful tool to describe the dynamics of the chemostat. Indeed, from the results in \cite{amsterdiscrete} it is directly deduced  that the  conditions 
$$\underline \beta >1,\qquad \overline \beta <1$$
imply, respectively, that the $x$-coordinate of the positive trajectories remains bounded away from $0$ or tends to $0$ as $t\to+\infty$. 
Thus, the Bohl exponents play a central role in the study of the dynamics of the system. 
A special situation occurs when $s^0$ is 
$\omega$-periodic for some $\omega\in\mathbb N$, since in this case it is seen that $z$ is also $\omega$-periodic and $\underline \beta=\overline \beta:=\beta$. The value $\beta=1$ is a threshold and {in \cite{amsterdiscrete}} it is proven that  the persistence assumption $\beta>1$ yields the existence of a unique (attractive) positive $\omega$-periodic solution, while the assumption $\beta < 1$ implies that all the trajectories converge to the washout solution. However, nothing is said in \cite{amsterdiscrete} 
about the limit case $\beta=1$.  

A delayed version of the previous discrete model was firstly introduced in \cite{AR24}, based on a model by Caperon studied in \cite{amster2020dynamics}. 
However, unlike the continuous $\omega$-periodic model, the assumption $\beta>1$ does not seem to suffice 
to ensure  the existence of a nontrivial $\omega$-periodic solution. 
Here, we shall extend  the above described results to a different  discrete delayed system, in this case based on a continuous model proposed by Ellermeyer in \cite{ellermeyer1994competition}, which was also studied in \cite{AR20,cartabia2023persistence,cartabia2025uniform}. Namely, we shall consider the problem
\begin{equation}\label{modeloprincipal-intro}
  \left\{\begin{aligned}
  s_{t+1}&=Es_{t}^0+(1-E)(s_{t}-x_{t}p(s_{t}))\\
  x_{t+1}&=(1-E)x_{t}+x_{t-r}p(s_{t-r})(1-E)^{r+1}
  \end{aligned}\right.
\end{equation}
where $r\in \mathbb N_0$   is a fixed delay. 
Appropriate Bohl exponents will be set in terms of an auxiliary function $\varphi$, which shall be defined in order to take the delay into account. 
The definition of $\varphi$ follows the ideas introduced in \cite{cartabia2023persistence} for 
the continuous model, conveniently adapted to the discrete setting. 
 As we shall see, 
 in the non-delayed case it is verified that $\varphi\equiv 1$ and the results 
 established below slightly improve those obtained in \cite{amsterdiscrete}. 
 Thus, the methods  in the present work can be regarded as a 
combination of those employed in 
\cite{cartabia2023persistence}
and in \cite{amsterdiscrete}.

In more precise terms,  the main contributions of the present work can be described as follows. 
A discrete version of the Ellermeyer delayed chemostat model presented in \cite{ellermeyer1994competition} is  studied.
Analogous versions of the known results obtained in \cite{cartabia2023persistence, cartabia2025uniform} for the continuous case are obtained, bridging gaps between theoretical and computational 
approaches. 
On the one hand,  it will be shown by
Theorem \ref{teoremapersistencia} 
that the condition $\underline \beta >1$ 
for persistence is necessary and even more, the persistence of a single solution implies the uniform persistence of the entire system.   
Interestingly, this 
fact seems to have not been noticed previously
 in the literature, not even for the continuous undelayed model
(\ref{continuous model}) treated for example in  \cite{ellermeyer2001persistence}, 
 although the proof is very  simple in this particular case (see Remark \ref{continuous} below). 
 {Further, Theorem \ref{atractividad} proves that the persistence of system (\ref{modeloprincipal-intro}) implies that any positive solution attracts geometrically all the other positive 
 solutions. 
 A sort of opposite situation is verified in Theorem \ref{extinciongeneral}, which 
 proves that the inequality $\overline \beta<1$ provides a sufficient condition for the extinction of all positive solutions. 
 }


{Despite the previously mentioned results, it will be  shown with an example that the case $\overline \beta\ge 1\ge \underline \beta$ 
may lead to  situations in which some trajectories persist while some others become extinct. 
  However, this cannot occur in the biologically 
  relevant periodic case, namely  
when the input $s^0$ in (\ref{modeloprincipal-intro}) is an $\omega$-periodic function. Here, 
the upper and lower Bohl exponents coincide and
the dynamics can be fully characterized. As shown below in 
Theorem \ref{persitenciaperiodico},  in this case persistence implies the existence of an (attractive) $\omega$-periodic positive solution and, on the other hand, 
Theorem \ref{extincionperiodico}  states that 
 non-persistence   always implies extinction. 
 Furthermore, the conditions for the persistence/extinction scenarios can be expressed in terms of the geometric mean of the function $(1-E) (1+\varphi p(z))$.
 Together  with the result for the general case, the existence of a nontrivial periodic solution implies its attractiveness and the uniform persistence of 
the system. This is consistent with the known results for the continuous 
 undelayed case, see  e.g. \cite[Cor 2.3]{wolk} but, to our knowledge, constitutes 
 an improvement when a positive delay is included to the model. 
}

The {rest of the} 
paper is organized as follows.
The next section is devoted to describe  a discrete delayed model for a one-species chemostat based on the continuous model introduced in \cite{ellermeyer1994competition}. 
In section \ref{prelim}, we give the basic definitions and notations that are needed to establish our 
main results, which shall be presented in section \ref{main}.
The periodic case is addressed in  
a separate subsection.  For the sake of clarity,  section 
\ref{technical} is devoted to establish 
the technical lemmata that  shall be used in
section \ref{proofs}, where  the main results are proven.  
Finally, numerical simulations to ilustrate the theoretical results are presented in section \ref{simulaciones}.

\section{A delayed discrete system based on Ellermeyer's continuous model 
}
\label{sec-2}

   In this section, we shall deduce a delayed discrete model for a one-species chemostat based on  \cite{ellermeyer1994competition}. 
With this in mind, we shall assume a number  
$N+1$  of population classes, a washout   constant $\hat E\in (0,1)$, and growth function $\hat p$. The $0$-th class means maturity and the classes $1,\ldots, N$ mean immaturity, where the first one is the most immature and the $N$-th one is the closest to being mature. The biomass of of class $k$ shall be denoted by $x^{(k)}_t$. Each step between classes has length $h$, and the 
discrete model approaches to the continuous one by letting $h\to 0$, with  
$$Nh=\textit{constant}:=\hat r.$$ 
Thus, the constant $\hat r$ represents the total time required by the species to reach maturity.
We remark that if $N=0$ there is a unique maturity class and no delay. 
Here, we shall assume that 
each population class evolves completely at each time step, except the maturity one. Moreover, 
the $0$-th class    consumes substrate and generates an amount $h\hat p(s_t)x^{(0)}_t$ of new biomass. Thus we have:
\begin{equation*}
\left\{\begin{aligned}
  x^{(1)}_{t+h}&\approx   x^{(0)}_t\,h\hat p(s_t)(1-h\hat E),\\
  x^{(2)}_{t+2h}&\approx x^{(1)}_{t+h}(1-h\hat E),\\
  \vdots\\
  x^{(N)}_{t+Nh}&\approx x^{(N-1)}_{t+(N-1)h}(1-h\hat E),\\
  x^{(0)}_{t+(N+1)h}&\approx \left(x^{(0)}_{t+Nh}+x^{(N )}_{t+Nh}\right)(1-h\hat E),
\end{aligned}
\right.
\end{equation*}
and inductively 
{
\begin{align*}
    x^{(0)}_{t+ h}&\approx  (1-h\hat E)x^{(0)}_{t }+x^{(N )}_{t } (1-h\hat E)\\
    &\approx (1-h\hat E)x^{(0)}_{t }+x^{(N-1)}_{t-h} (1-h\hat E)^2\\ 
    &\approx \ldots \\
    &\approx (1-h\hat E)x^{(0)}_{t}+ x^{(0)}_{t -\hat r} h\hat p(s_{t-\hat r}) (1-h\hat E)^{N+1}.
\end{align*}
}
Thus, 
$$x^{(0)}_{t+h}-x^{(0)}_{t }\approx -h\hat Ex^{(0)}_{t }+ x^{(0)}_{t-\hat r}h\hat p(s_{t-\hat r})(1- h\hat E)^{N+1}$$
which, in turn, implies 
$$\frac{x^{(0)}_{t+h}-x^{(0)}_{t }}{h}\approx -\hat Ex^{(0)}_{t}+x^{(0)}_{t-\hat r}\hat p(s_{t-\hat r})(1-h \hat E)^{\hat r/h+1}.$$ 
By letting  $h  \to 0$, the following Ellermeyer equation is obtained \cite{ellermeyer1994competition}:
$$\frac{d}{dt}x _{t}=-\hat Ex _{t}+x _{t-r}\hat p(s_{t-\hat r})e^{-\hat r \hat E}.$$

Moreover, according to 
\cite[Eq- (1.1)]{ellermeyer1994competition}, 
it is deduced that  
\begin{align*}
  \sum_{k=1}^{N} x^{(k)}_t &\approx \sum_{k=1}^{N} x^{(1)}_{t-(k-1)h}(1-h\hat E)^{k-1}  \approx \sum_{k=1}^{N} x^{(0)}_{t-kh}h\hat p(s_{t-kh})(1-h\hat E)^{k}  
\end{align*}
and, again, letting $h\to 0$  a formula for the concentration of nutrient stored internally by the species reads
$$y(t)=\int_0^{\hat r} x_{t-\sigma}\hat p(s_{t-\sigma})e^{ -\sigma \hat E}\, d\sigma$$
and has a fundamental role in the analysis of the continuous model.

{On the other hand, 
the substrate consumption by the $0$-th class
yields the following formula:  
$$s_{t+h}\approx h\hat Es_t^{0}+(1-h\hat E)s_t-x_t^{(0)}h\hat p(s_t)(1-h\hat E),$$
that is
\begin{align*}
  \frac{s_{t+h}-s_t}{h}&\approx \hat E(s_t^{0}-s_t)-x_t^{(0)}\hat p(s_t)(1-h\hat E) 
\end{align*} 
and we recover the equation for $s$,
$$\frac{d}{dt}s_t=E(s_t^{0}-s_t)-x_t\hat p(s_t)$$
as $h\to 0$.}

Since we shall be dealing with a discrete version of the model, let us fix a positive constant $h$, which is assumed to be small. Then, by defining $E:=h\hat E$, $p:=h\hat p$, and $r:=\hat r/h=N$, we get the system:
\begin{equation*} 
  \left\{\begin{aligned}
  s_{t+h}&=  Es_{t}^0+(1-  E)(s_{t}- x_{t}p(s_{t}))\\
  x_{t+h}&=(1-  E)x_{t}+ x_{t-h r} p(s_{t- hr})(1-  E)^{r+1}.
  \end{aligned}\right.
\end{equation*}

Note that the value of $h$ only appears in the time domain and does not affect the dynamics of the model. Therefore, for simplicity, we can fix $h=1$ in order to obtain the above mentioned system (\ref{modeloprincipal-intro}) with a delay $r$:
\begin{equation}\label{modeloprincipal}
  \left\{\begin{aligned}
  s_{t+1}&=Es_{t}^0+(1-E)(s_{t}-x_{t}p(s_{t}))\\
  x_{t+1}&=(1-E)x_{t}+x_{t-r}p(s_{t-r})(1-E)^{r+1}.
  \end{aligned}\right.
\end{equation}
 As usual, an initial condition for (\ref{modeloprincipal}) shall be given by a vector 
  $(s^{in},x^{in})\in [0,+\infty)^{r+1}\times [0,+\infty)^{r+1}$, namely
  $$(s^{in},x^{in})= (s^{in}_{-r},\ldots, s^{in}_{0},x^{in}_{-r},\ldots, x^{in}_{0}).
  $$
  As before, it is noticed that, 
  unlike the continuous model, extra assumptions are needed in order to   guarantee that the substrate remains nonnegative for all nonnegative values of $t$ (see Lemma \ref{posit} below). This need is quite obvious even for linear difference equations: for 
  instance, when dealing with the problem $u_{t+1} = (1 + a_t)u_t$, in order to preserve positiveness it is required that $a_t>-1$ for all $t$.

\section{Preliminaries}
\label{prelim}

Here, we introduce the basic definitions and notations that shall be used throughout  the paper. In the first place, analogously to the continuous case, the quantity
  \begin{equation}\label{definicionY}
    y_{t+1}:=\sum_{k=0}^{r-1} x_{t-k } p(s_{t-k })(1-E)^{k+1}
\end{equation}
represents the nutrient that is internally stored by the species and 
shall be crucial in our analysis. Moreover, due to the presence of delay, the linear equation 
that will allow us to compute an accurate Bohl  exponent now reads
$$c_{t+1}=(1-E)c_{t}+c_{t-r}p(z_{t-r})(1-E)^{r+1}.
$$
For convenience, let us define 
\begin{equation}\label{definicionPhi}
    \varphi_{t-r}:=\frac{c_{t-r}}{c_{t}}(1-E)^{r} \qquad t\ge 0,
\end{equation} 
so the previous equation can be written as 
\begin{equation}\label{pseudo-linear}
c_{t+1}=(1-E)(1+\varphi_{t-r} p(z_{t-r}))c_{t}.    
\end{equation}
Thus, if the initial condition for $c$ is chosen in such a way that $c_0=1$, then
it follows that
$$c_{t+1}=(1-E)^{t+1}\prod_{k=-r}^{t-r}( 1+\varphi_{k} p(z_{k})).$$
This yields a useful fixed-point
identity for   $\varphi$: namely, writing  
$$
  \varphi_{t+1} =\frac{(1-E)^{t+1}\prod_{k= -r}^{t-r}( 1 +\varphi_{k} p(z_{k}))}{(1-E)^{t+1+r}\prod_{k=-r}^{t}( 1 +\varphi_{k} p(z_{k}))}(1-E)^{r },
$$
it is deduced that 
$$\varphi_{t+1}= \prod_{k=t+1-r}^{t}( 1+\varphi_{k} p(z_{k}))^{-1}.$$

 A similar expression can be obtained for 
$x$, by letting 
\begin{equation}\label{definicionPsi}
    \psi_t:=\frac{x_t}{x_{t +r}}(1-E)^{r},
\end{equation}  
so we may write
$$x_{t+1}=(1-E)x_{t}+x_{t-r}p(s_{t-r})(1-E)^{r+1}=(1-E)x_{t}( 1 +\psi_{t-r} p(s_{t-r}))$$
and consequently
 \begin{equation}\label{expresionxt}
  \begin{aligned}
  x_{t+1} =x_{ 0}(1-E)^{t+1}\prod_{k=-r}^{t-r}( 1 +\psi_{k } p(s_{k }))
  \end{aligned}
\end{equation}
whence, as before,   
\begin{align*}
  \psi_{t+1}& = \prod_{k=t+1-r}^{t}( 1+\psi_{k} p(s_{k}))^{-1} .
\end{align*}
For the reader's convenience, 
it is worth noticing  that, as usual, for an arbitrary set of coefficients $\{j_k\}_{k\in K}$
it is understood 
that, if $K=\emptyset$, then 
   $$\prod_{k\in K}j_k=1.$$
In particular, for the undelayed case $r=0$ it follows that   $\varphi_t=\psi_t\equiv 1$, Obviously, this can be also deduced directly from \eqref{definicionPhi} and \eqref{definicionPsi}. 
\begin{rem}
    The roles of the mappings $\varphi$ and $\psi$ can be understood as follows. Let us firstly observe, in the continuous undelayed model (\ref{continuous model}), 
    that adding the two equations of the system yields the following equation for $v:=s+x$:
    $$v'(t)= E(s^0(t)-v(t)).$$
    This represents a key aspect of the model and allows to ensure that the total amount of the substrate and biomass approaches, as $t\to+\infty$, to the function  $z$, namely the first coordinate of the washout solution. The Bohl exponents are computed from the second equation of the system, in which the $s$ variable is replaced by $z$.
    When a delay is present, the new variable $y$ detailed  in section \ref{sec-2} is introduced 
    in order to fill the gap between $s+x$ and $z$ and, in some sense, the 
    functions $\varphi$ and $\psi$ introduce a ``correction" into the 
    above mentioned equation due to the delay.     
    This machinery was already developed in the continuous case in \cite{ellermeyer1994competition, cartabia2023persistence}
    whose procedure is emulated here for the discrete problem. 
    
\end{rem}
As it is usual in the literature on chemostat models (see e.g. \cite{AR24}), the notions of persistence or extinction of the solutions always refer to the biomass, that is:

\begin{defi}
    \begin{enumerate}
        \item A solution $(s_t,x_t)$ of (\ref{modeloprincipal}) shall be called (strongly) persistent if 
        $\displaystyle\liminf_{t\to\infty} x(t)>0$. \item System (\ref{modeloprincipal}) shall be called persistent if all its nontrivial solutions are persistent. 
        \item  System (\ref{modeloprincipal}) shall be called uniformly persistent if there exists $\delta>0$ such that each nontrivial trajectory $(s_t,x_t)$ satisfies  $\displaystyle\liminf_{t\to\infty} x(t)\ge \delta$.
    \end{enumerate}
\end{defi}
 \begin{defi}
     \begin{enumerate}
        \item A nontrivial solution $(s_t,x_t)$ of (\ref{modeloprincipal}) goes to extinction if  $\displaystyle \lim_{t\to\infty} x(t) =0$. 
        \item System (\ref{modeloprincipal}) goes to extinction if all its nontrivial solutions go to extinction. 
     \end{enumerate}         
 \end{defi}

To conclude this section, 
let us recall that, as mentioned in the introduction, 
it shall be assumed 
throughout the paper that 
$s^0_t$ is bounded between two positive constants and that $E\in (0,1)$.

\section{Main results}

\label{main}

In this section, we shall establish our main 
results. In a first subsection it shall be stated  that the persistence of at least one solution implies the uniform persistence of the system and, furthermore, 
that any nontrivial trajectory attracts all the others. 
This can be expressed in terms of some appropriate lower Bohl exponent associated to the problem, while  a reversed inequality for the upper exponent will imply that the system is driven to extinction. 
Furthermore, it will be shown with a simple example 
that it may happen that trajectories do not persist in a non-extinction scenario; however, in the periodic  case the picture is complete: the main result in the second subsection establishes that   non-persistence always implies extinction.

\subsection{The general non-autonomous case}

\begin{thm}\label{teoremapersistencia}
The following statements for model
\eqref{modeloprincipal} are equivalent:   
\begin{enumerate}
  \item\label{z} There {exists a solution  $(s_t,x_t)$ such that
  $$\liminf_{t\to\infty}x_t>0;$$}
  \item\label{a} The system is   persistent;
  \item\label{b} The system is  uniformly persistent; 
  \item\label{c} There exists $\delta>0$ 
  such that, 
for every  $R,  \alpha>0$, there exists $T=T(R,\alpha)\geq 0$ such that if a trajectory $(s_t,x_t)$ verifies $\|(s^{in},x^{in})\|\leq R$ and $x_0\geq \alpha$ then $x_t\geq \delta $ for all $t\geq T$;
  \item\label{d}  There exist constants  $\eta>0$ and $T>r$ such that   
\begin{equation}\label{condicionpersistencia}
  \prod_{k=t_1+1}^{t_2}(1 +\varphi_{k-r}p(z_{k-r}) )>\left(\frac{1+\eta}{1-E}\right)^{ t_2-t_1  }
\end{equation}
for all $t_1>T$ and $t_2-t_1>T$. 
\end{enumerate}
\end{thm}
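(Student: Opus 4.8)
The plan is to establish the cyclic chain $(\ref{c})\Rightarrow(\ref{b})\Rightarrow(\ref{a})\Rightarrow(\ref{z})\Rightarrow(\ref{d})\Rightarrow(\ref{c})$; the first three implications are immediate from the definitions (a nontrivial solution is eventually positive and bounded, so after a time shift it falls under the scope of statement $(\ref{c})$ with its uniform constant $\delta$). So the content is in $(\ref{z})\Rightarrow(\ref{d})$ and $(\ref{d})\Rightarrow(\ref{c})$. Throughout I would use two reductions proved first. First, by Lemma~\ref{posit} every solution is non-negative, adding the two equations of \eqref{modeloprincipal} and invoking \eqref{definicionY} shows that $v_t:=s_t+x_t+y_t$ obeys the same recursion \eqref{eq: 4.4} as $z$, so $v_t-z_t=(1-E)^t(v_0-z_0)\to 0$ geometrically; hence $s,x,y$ are bounded, $s_{t+1}\ge E s^0_{\inf}>0$, and $z_t-s_t=x_t+y_t+o(1)$. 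Second, from the second equation $x_{t+1}\ge(1-E)x_t$, so $x$ decays by at most the factor $1-E$ per step.

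For $(\ref{z})\Rightarrow(\ref{d})$: assume some trajectory satisfies $\liminf_{t\to\infty}x_t>0$ and fix $m>0$ with $m\le x_t\le M$ for large $t$; by the first reduction also $0<s_*\le s_t\le s^*$ eventually. Since $z_t-s_t=x_t+y_t+o(1)\ge x_t+o(1)$, there is a uniform gap $z_t-s_t\ge m$ eventually, and as $p$ is strictly increasing with $p'>0$ on the compact set $[s_*,z_{\sup}]$ this gives $p(z_t)-p(s_t)\ge\delta_1>0$ for large $t$. By \eqref{expresionxt}, $\prod_{k=t_1+1}^{t_2}(1-E)(1+\psi_{k-r}p(s_{k-r}))=x_{t_2+1}/x_{t_1+1}\in[m/M,M/m]$, which is bounded; by \eqref{pseudo-linear}, $\prod_{k=t_1+1}^{t_2}(1-E)(1+\varphi_{k-r}p(z_{k-r}))=c_{t_2+1}/c_{t_1+1}$. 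The comparison estimates of Section~\ref{technical} turn the pointwise coefficient gap $p(z_t)-p(s_t)\ge\delta_1$, together with the uniform two-sided bounds on $\varphi$ and $\psi$ established there, into a uniform multiplicative gap $1+\varphi_{k-r}p(z_{k-r})\ge(1+\eta_0)(1+\psi_{k-r}p(s_{k-r}))$ for all large $k$; multiplying over $[t_1+1,t_2]$ gives $c_{t_2+1}/c_{t_1+1}\ge(1+\eta_0)^{t_2-t_1}(m/M)$, which is precisely \eqref{condicionpersistencia} for any $\eta<\eta_0$ once $T$ is large enough.

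For $(\ref{d})\Rightarrow(\ref{c})$: the first reduction gives, for each $R$, a time $T_1(R)$ after which any trajectory with $\|(s^{in},x^{in})\|\le R$ satisfies $s_t,x_t,y_t\le z_{\sup}+1=:C_0$, and a further number of steps $T_4(\delta)$, independent of $R$, after which $|v_t-z_t|<\delta$. The mechanism is that on any window $[\tau,\tau']$ with $\tau\ge T_1(R)+T_4(\delta)$ along which $x_s<\delta$, one has $y_s=O(\delta)$ and $z_s-s_s=O(\delta)$, hence $p(s_{s-r})\ge p(z_{s-r})-\epsilon'(\delta)$; inserting this lower bound into the second equation of \eqref{modeloprincipal} and comparing with the solution of \eqref{pseudo-linear} whose coefficient is $p(z)-\epsilon'(\delta)$ — whose geometric growth rate stays at least $1+\eta/2$ for $\delta$ small, by \eqref{condicionpersistencia} and continuity of the delay Bohl exponent under uniform perturbations of the coefficient (Section~\ref{technical}) — forces $x_{\tau'}\ge\kappa\,(1+\eta/2)^{\tau'-\tau-T_3}(1-E)^{T_3}\delta$ with $\kappa,T_3$ uniform. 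Since $x\le C_0$, every such bad window has length at most a uniform $L_0$, so within it $x\ge(1-E)^{L_0+1}\delta$; hence once a trajectory first reaches the level $\delta$ after time $T_1(R)+T_4(\delta)$ it never again drops below $(1-E)^{L_0+1}\delta=:\delta$, which fixes the uniform persistence constant. Finally $x_0\ge\alpha$ gives $x_t\ge(1-E)^t\alpha>0$ (so the trajectory is nontrivial), and the same growth mechanism shows it rises from level $\alpha$ to level $\delta$ within $T_1(R)+T_4(\delta)+O(\log(1/\alpha))$ steps, after which the previous sentence applies; this yields the required $T(R,\alpha)$.

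The main obstacle is not the bookkeeping above but the two technical inputs on the linear delay equation \eqref{pseudo-linear} that feed both directions: (i) a quantitative comparison principle — a uniform pointwise gap in the growth coefficient $p(\cdot)$ produces a uniform multiplicative gap in the per-step growth factor over long windows — and (ii) continuity of the delay Bohl exponent under uniform perturbations of that coefficient, together with uniform two-sided bounds on the auxiliary functions $\varphi$ and $\psi$ of \eqref{definicionPhi} and \eqref{definicionPsi}. In the undelayed case $r=0$ one has $\varphi\equiv\psi\equiv1$ and the relevant products telescope directly over the coefficients, so (i)--(ii) become one-line estimates; for $r\ge1$ the non-commutative delay structure makes them genuinely delicate, and I would isolate them as the lemmata of Section~\ref{technical} before entering the proof of Theorem~\ref{teoremapersistencia}.
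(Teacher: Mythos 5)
Your decomposition is the same as the paper's: the chain \ref{c}$\Rightarrow$\ref{b}$\Rightarrow$\ref{a}$\Rightarrow$\ref{z} is immediate, and the substance lies in \ref{z}$\Rightarrow$\ref{d} and \ref{d}$\Rightarrow$\ref{c}. However, there is a genuine gap in your \ref{z}$\Rightarrow$\ref{d} argument as written. You pass from the coefficient gap $p(z_k)\ge p(s_k)+\delta_1$ to the \emph{pointwise} inequality $1+\varphi_{k-r}p(z_{k-r})\ge(1+\eta_0)(1+\psi_{k-r}p(s_{k-r}))$ ``for all large $k$'' and then multiply over $[t_1+1,t_2]$. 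For $r\ge 1$ this per-index inequality does not follow and is in general false: $\varphi$ and $\psi$ obey different recursions, $\varphi_{t+1}=\prod_{k=t+1-r}^{t}(1+\varphi_k p(z_k))^{-1}$ and analogously for $\psi$ with $p(s)$, and a \emph{larger} coefficient history makes the corresponding auxiliary function \emph{smaller}. At a time $k$ where $\varphi_k$ is near its lower bound (of order $(1+M)^{-r}$) while $\psi_k$ is close to $1$, one can have $1+\varphi_k p(z_k)<1+\psi_k p(s_k)$ even though $p(z_k)>p(s_k)+\delta_1$ (e.g.\ $r=1$, $p(z_k)=M=1$, $\varphi_k=1/2$, $\psi_k=1$, $p(s_k)=1-\varepsilon$). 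The correct statement — which is what the paper proves in Lemma \ref{lemaida}, following the continuous-case argument — concerns only the \emph{product over long windows}: its proof needs the two-case analysis $\kappa\varphi_t\le\psi_t$ versus $\kappa\varphi_t\ge\psi_t$ and a decomposition of $[t_1,t_2]$ into blocks of length $r$ on which the product telescopes to $\psi_t/\varphi_t$, the gain $(1+\eta)^{3r}$ being obtained per block rather than per step. You do state the windowed form in your closing paragraph and propose to isolate it as a lemma, but since this comparison is precisely the mathematical core of the theorem in the delayed case, asserting it in the (false) pointwise form and otherwise deferring it leaves the proof incomplete.

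The same remark applies, to a lesser extent, to \ref{d}$\Rightarrow$\ref{c}: the input you call ``continuity of the delay Bohl exponent under uniform perturbations of the coefficient'' is exactly the paper's Lemma \ref{nuevolemavuelta}, a two-solution comparison giving $c_t/x_t\le\bigl(\inf f/(\inf f-\varepsilon)\bigr)^{(t-t_0)/(r+1)}$ times the initial ratio when $|f_t-g_t|<\varepsilon$; you invoke it but do not prove it. The surrounding bookkeeping you supply is sound and parallels the paper's proof: the identity for $s+x+y$ versus $z$ (Lemma \ref{lemaclave}) giving boundedness and $z_t-s_t=x_t+y_t+o(1)$; the observation that while $x$ stays below a small level, $y$ is small, $s$ is close to $z$, and \eqref{condicionpersistencia} forces geometric growth of $x$ (the paper runs this as a contradiction over the window $[t_0-r,\tilde T]$, you as a bounded-length bad-window argument — equivalent in substance); and the bootstrap producing a $\delta$ independent of $(R,\alpha)$. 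So the architecture matches the paper's, but the two delay-comparison lemmata that carry the proof are missing, and the one you state explicitly is stated in a form that fails.
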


\begin{rem}
Omitting the fact that $\varphi$ depends on $c$, equation
\eqref{pseudo-linear}
may be regarded as linear, so the associated  
lower and Bohl exponents can be computed as
$$\underline{\beta}(\varphi p(z)):= \liminf_{t_1, t_2-t_1\to +\infty} 
\left(\prod_{k=t_1+1}^{t_2}[(1-E)(1 +\varphi_{k}p(z_{k}))] \right)^{\frac 1{t_2-t_1}}.$$
and
$$\overline{\beta}(\varphi p(z)):= \limsup_{t_1, t_2-t_1\to +\infty} 
\left(\prod_{k=t_1+1}^{t_2}[(1-E)(1 +\varphi_{k}p(z_{k}))] \right)^{\frac 1{t_2-t_1}}.$$
Thus, it is immediately seen that condition \ref{d} in 
the previous theorem can be  written as 
$\underline \beta>1$. 
In particular,  when $r=0$, it was already mentioned that $\varphi\equiv 1$, so the result is consistent with the corresponding one in \cite{amsterdiscrete}.

\end{rem}


\begin{rem}
\label{continuous} 
Consider the continuous model \eqref{continuous model}, with   $E>0$ and $s^0$  continuous and bounded between two positive constants, then the unique bounded solution of the linear equation $w'(t)=E(s^0(t)-w(t))$ is now given by 
 $$z(t):=\int_{-\infty}^t s^0(\xi) e^{E(\xi-t)}\, d\xi,$$
which is exponentially attractive as $t\to+\infty$. In particular, if $(s,x)$
is a nonnegative solution of the system, then $$(z(t) - s(t)- x(t) )= (z(0)-s(0)-x(0))e^{-Et}.$$  
Thus, if $x(t)\ge c >0$ for $t\gg 0$, 
then for some $\eta >0$ it is verified that $p(z(t)) > p(s(t)) +\eta$ for large values of $t$. Integrating the equation for the biomass it follows, for arbitrary $t_2>t_1\gg 0$ that
$$\ln 
\left(\frac{x(t_2)}{x(t_1)} 
\right) = \int_{t_1}^{t_2} (p(s(\xi))-E)\, d\xi.
$$
Taking now into account that $x$ is also bounded from above, the previous identity yields
$$\liminf_{t_1, t_2-t_1\to+\infty}
\frac 1{t_2-t_1}\int_{t_1}^{t_2}  {(p(z(\xi))-E)}\, d\xi \ge  \eta.
$$
As mentioned in the introduction, for the continuous case, the 
left-hand side of the previous inequality is the lower Bohl exponent associated to the continuous linear equation $c'(t)=(p(z(t))-E)c(t)$. In other words,  the existence of a persistent trajectory implies 
that such exponent is positive and, in turn, this implies the uniform persistence of the system.

\end{rem}

The following result shows that, furthermore, the condition for persistence implies 
the attractiveness. In more precise terms, we have: 

\begin{thm}\label{atractividad}
    In the context of the  previous theorem, 
     the equivalent conditions 1--5  imply that any positive solution attracts geometrically all the positive trajectories. 
\end{thm}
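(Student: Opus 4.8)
The plan is to combine a conservation identity with the uniform persistence supplied by Theorem~\ref{teoremapersistencia}, thereby reducing the evolution of the difference between two positive solutions to a scalar delay--difference inequality driven by a strictly negative feedback. First, adding the $s$- and $x$-equations of \eqref{modeloprincipal} to the identity $y_{t+1}-(1-E)y_t=(1-E)x_tp(s_t)-(1-E)^{r+1}x_{t-r}p(s_{t-r})$, which follows at once from \eqref{definicionY} by reindexing, one finds that $v_t:=s_t+y_t+x_t$ satisfies exactly the linear recursion \eqref{eq: 4.4}. Hence $v_{t+1}-z_{t+1}=(1-E)(v_t-z_t)$ and $|v_t-z_t|=(1-E)^t|v_0-z_0|$; in particular, for two positive solutions $(s_t,x_t)$ and $(\tilde s_t,\tilde x_t)$, with $\tilde v_t,\tilde y_t,\tilde\psi_t:=(\tilde x_t/\tilde x_{t+r})(1-E)^r$ defined analogously, the quantity $u_t:=v_t-\tilde v_t$ decays geometrically. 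Combining this with Lemma~\ref{posit} and the uniform persistence one gets, for $t$ large, $x_t,\tilde x_t\ge\delta$, all of $s_t,\tilde s_t,y_t,\tilde y_t$ bounded above, and $z_t-s_t=x_t+y_t-(v_t-z_t)\ge x_t-|v_t-z_t|\ge\delta/2$; since $p\in C^1$ is positive and increasing, $p'$ is bounded below by some $c_p>0$ on the relevant compact range, so $p(z_t)-p(s_t)\ge c_p\delta/2=:\kappa>0$.

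Next I would show that the substrate difference is slaved to the biomass difference. Let $\Delta_t:=s_t-\tilde s_t$ and $D_t:=x_t-\tilde x_t$. Writing $x_tp(s_t)-\tilde x_tp(\tilde s_t)=x_tp'(\xi_t)\Delta_t+p(\tilde s_t)D_t$ by the mean value theorem, the $s$-equation of \eqref{modeloprincipal} gives $\Delta_{t+1}=(1-E)\bigl(1-x_tp'(\xi_t)\bigr)\Delta_t-(1-E)p(\tilde s_t)D_t$. Since $x_tp'(\xi_t)\le v_tp'(\xi_t)=z_tp'(\xi_t)+O((1-E)^t)\le 1+C(1-E)^t$ by \eqref{pz}, the homogeneous coefficient has modulus $\le 1-E<1$ for $t$ large; together with the boundedness of $p$, this yields $|\Delta_t|\le C\sum_{k<t}(1-E)^{t-1-k}|D_k|$, so that geometric decay of $D_t$ forces geometric decay of $\Delta_t$. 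Since moreover $y_t-\tilde y_t=u_t-\Delta_t-D_t$, once $D_t$ is known to decay geometrically so does the whole difference, and the theorem follows.

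Finally I would analyze the biomass ratio $w_t:=x_t/\tilde x_t$, which by persistence remains in a compact subinterval of $(0,\infty)$. From $x_{t+1}=(1-E)\bigl(1+\psi_{t-r}p(s_{t-r})\bigr)x_t$ and its analogue for $\tilde x$,
\begin{equation*}
\ln w_{t+1}-\ln w_t=\ln\bigl(1+\psi_{t-r}p(s_{t-r})\bigr)-\ln\bigl(1+\tilde\psi_{t-r}p(\tilde s_{t-r})\bigr).
\end{equation*}
Expanding the right-hand side, using that $\psi,\tilde\psi$ are bounded away from $0$ and $\infty$ and that $|\psi_t-\tilde\psi_t|$ is itself controlled by $\max_k|s_k-\tilde s_k|$ over a window of length $r$ via the identity $\psi_{t+1}=\prod_{k=t+1-r}^{t}(1+\psi_kp(s_k))^{-1}$, and substituting $s_{t-r}-\tilde s_{t-r}$ from the previous step together with $x_{t-r}-\tilde x_{t-r}=\tilde x_{t-r}(w_{t-r}-1)$, one arrives at a delay inequality of the form
\begin{equation*}
\bigl|\ln w_{t+1}-\ln w_t+K_{t-r}(w_{t-r}-1)\bigr|\le\varepsilon\max_{t-2r\le k\le t}|w_k-1|+C(1-E)^t,
\end{equation*}
where $K_{t-r}\ge K_0>0$ because it carries the factors $\psi_{t-r}$, $p'(s_{t-r})$ and $\tilde x_{t-r}+\tilde y_{t-r}$, all bounded below by persistence, and where the nuisance constant $\varepsilon$ can be made small since every coupling term carries extra powers of $1-E$. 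A stability lemma for scalar delay--difference inequalities of this kind---of the type collected in Section~\ref{technical}---then forces $\ln w_t$ to converge geometrically, and the limit must be $0$ because $u_t\to 0$ while $\tilde x_t+\tilde y_t$ stays bounded below. Hence $w_t\to 1$ geometrically, so $|x_t-\tilde x_t|\le C\mu^t$ for some $\mu\in(0,1)$, and the previous step completes the proof.

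I expect this last step to be the main obstacle. In the continuous undelayed model the analogous negative feedback is instantaneous, so a single Gronwall/comparison estimate closes the loop (compare Remark~\ref{continuous}); here it is lagged by $r$ steps, so one cannot argue term by term and must instead rely on a genuine stability statement for delay--difference inequalities, together with a quantitative control of the feedback strength. One must also verify that the abundance of off-diagonal coupling terms produced by the auxiliary functions $\varphi,\psi$ and by the internal-store variable $y$ come with small enough coefficients---the extra $(1-E)^r$ factors---not to overwhelm the feedback, and that every remainder decays geometrically rather than merely tending to $0$; this last point is precisely what the geometric decay of $v_t-z_t$ established in the first step is for.
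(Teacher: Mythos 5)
Your first two reductions are fine (the conservation identity for $v_t=s_t+x_t+y_t$ and the fact that geometric decay of $D_t=x_t-\tilde x_t$ forces geometric decay of $\Delta_t=s_t-\tilde s_t$), but the proof does not close. The decisive step---geometric decay of the biomass difference, i.e. $w_t\to1$---rests on a ``stability lemma for scalar delay--difference inequalities'' which is not among the lemmata of Section \ref{technical} and is not a routine fact: for delayed negative feedback, a lower bound $K_{t-r}\ge K_0>0$ on the gain is not enough, and without a quantitative smallness condition of (gain)$\times$(delay) type such inequalities can produce oscillatory instability; you neither state nor verify any such condition. Moreover, the inequality you intend to feed into that lemma is itself not derived: to obtain the \emph{signed} feedback term $-K_{t-r}(w_{t-r}-1)$ you must express $s_{t-r}-\tilde s_{t-r}$ in terms of $x_{t-r}-\tilde x_{t-r}$ with a definite sign, and your only relation involving the stored nutrient, $y_t-\tilde y_t=u_t-\Delta_t-D_t$, is just the conservation law rearranged, hence circular. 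The identity that makes the signed slaving possible, $x_t=(1-E)^r(x_{t-r}+y_{t-r})$ (Lemma \ref{lema-atractividad}), never enters your argument. Finally, the claim that ``every coupling term carries extra powers of $1-E$'' and is therefore small is not uniformly true: the naive loop gain is of order $(1-E)\bigl(1+(1-E)^{r}p(\tilde s_t)\bigr)$, which may exceed $1$, so the persistence-induced contraction must be used quantitatively rather than as a nuisance estimate.

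For comparison, the paper closes the loop in the opposite direction. Combining Lemma \ref{lemaclave} with Lemma \ref{lema-atractividad} gives the exact signed identity $\hat x_t-x_t=(1-E)^tC+(1-E)^r(s_{t-r}-\hat s_{t-r})$, so everything reduces to the substrate difference; substituting this into the $s$-equation yields a closed delayed recursion for $s_t-\hat s_t$ whose instantaneous coefficient is $(1-E)\bigl[1-\hat x_tp'(\xi_t)\bigr]\le(1-E)(1-\gamma)$, with $\gamma>0$ supplied by the uniform persistence bound $\hat x_t\ge\delta$ of Theorem \ref{teoremapersistencia}. One then proves by induction the supersolution bound $s_t-\hat s_t\le w_t:=K(1-E)^{\varepsilon t}x_{t+r}$, where $\varepsilon$ is chosen so that $J(\varepsilon)\le0$ and the inequality $w_t\ge(1-E)^{\varepsilon r+r}w_{t-r}$ (again from Lemma \ref{lema-atractividad}) lets the $\gamma$-term absorb both the delayed term and the geometric remainder; the decay of $|s_t-\hat s_t|$ is then transferred to $|x_t-\hat x_t|$ through the same two lemmas. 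To salvage your plan you would have to replace the unproven delay-stability lemma by this identity-plus-supersolution induction (or prove such a stability lemma with explicit constants); as written, the argument has a genuine gap.
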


To conclude this subsection, 
we shall establish a result that, roughly speaking, complements 
Theorem \ref{teoremapersistencia}: again, analogously to the nondelayed case, it will be shown that the condition 
$\overline \beta(\varphi p(z)) < 1$ implies the extinction of all the solutions.

\begin{thm}\label{extinciongeneral}
    Assume there
 exist constants  $\eta>0$ and $T>r$ such that   
\begin{equation}\label{condicion-ext}
  \prod_{k=t_1+1}^{t_2}(1 +\varphi_{k-r}p(z_{k-r}) )<\left(\frac{1-\eta}{1-E}\right)^{ t_2-t_1  }
\end{equation}
for all $t_1>T$ and $t_2-t_1>T$. Then all the nonnegative solutions $(S,x)$ of (\ref{modeloprincipal}) satisfy
$$\lim_{t\to\infty} x_t=0.
$$

\end{thm}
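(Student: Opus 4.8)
The plan is to collapse the delayed system \eqref{modeloprincipal} into a scalar comparison inequality for the biomass and then read its decay off the Bohl-type hypothesis \eqref{condicion-ext}, using the preparatory material of Sections~\ref{prelim} and~\ref{technical}. Throughout I may assume that $x$ does not vanish identically for large $t$, since otherwise there is nothing to prove. \emph{Step 1 (the total amount tends to the washout profile).} I would first set $v_t:=s_t+x_t+y_t$, with $y$ as in \eqref{definicionY}: adding the $s$-equation and the $x$-equation of \eqref{modeloprincipal} to the telescoping identity $y_{t+1}-(1-E)y_t=(1-E)x_tp(s_t)-(1-E)^{r+1}x_{t-r}p(s_{t-r})$, the growth and consumption terms cancel and one is left with $v_{t+1}=(1-E)v_t+Es^0_t$, the very recursion satisfied by $z$. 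Hence $v_t-z_t=(1-E)^t(v_0-z_0)\to0$ geometrically; in particular $x$ is bounded and, since $x_t,y_t\ge0$, we get $s_t\le v_t\le z_t+C(1-E)^t$ for a suitable $C>0$.

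\emph{Step 2 (scalar comparison).} Fix a small $\epsilon>0$. By Step~1 there is $T_\epsilon\ge T$ with $s_t<z_t+\epsilon$ for $t\ge T_\epsilon$, so, $p$ being nondecreasing, for $t\ge T_\epsilon+r$
$$x_{t+1}=(1-E)x_t+(1-E)^{r+1}x_{t-r}p(s_{t-r})\le(1-E)x_t+(1-E)^{r+1}x_{t-r}p(z_{t-r}+\epsilon).$$
Let $\bar c$ be the solution of the linear delay equation $\bar c_{t+1}=(1-E)\bar c_t+(1-E)^{r+1}\bar c_{t-r}p(z_{t-r}+\epsilon)$ with $\bar c_j=x_j$ on the window $j\in\{T_\epsilon,\dots,T_\epsilon+r\}$. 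Since all the coefficients are nonnegative, an induction yields $0\le x_t\le\bar c_t$ for every $t\ge T_\epsilon$, and $\bar c$ is eventually positive.

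\emph{Step 3 (geometric decay of $\bar c$).} As in Section~\ref{prelim} I would attach to $\bar c$ its correction function $\bar\varphi$, which satisfies $\bar\varphi_{t+1}=\prod_{k=t+1-r}^{t}(1+\bar\varphi_kp(z_k+\epsilon))^{-1}$, so that $\bar c_{t+1}=(1-E)(1+\bar\varphi_{t-r}p(z_{t-r}+\epsilon))\bar c_t$ and therefore $\bar c_{t_2+1}/\bar c_{t_1+1}=(1-E)^{t_2-t_1}\prod_{k=t_1+1}^{t_2}(1+\bar\varphi_{k-r}p(z_{k-r}+\epsilon))$. The technical lemmata of Section~\ref{technical} provide uniform two-sided bounds for the correction functions and a Lipschitz-type control of the delayed-product recursion defining them; combined with the continuity of $p$, this should show that for $\epsilon$ small enough the hypothesis \eqref{condicion-ext} is inherited by the perturbed data, i.e.
$$\prod_{k=t_1+1}^{t_2}(1+\bar\varphi_{k-r}p(z_{k-r}+\epsilon))<\left(\frac{1-\eta/2}{1-E}\right)^{t_2-t_1}$$
for all large $t_1$ and $t_2-t_1$. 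Fixing such an $\epsilon$, it follows that $\bar c_{t_2+1}/\bar c_{t_1+1}<(1-\eta/2)^{t_2-t_1}$, so $\bar c_t\to0$ geometrically, and by Step~2, $x_t\to0$.

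\emph{Main obstacle.} Steps~1 and~2 are routine; the weight of the argument is in Step~3, namely the stability of the inequality \eqref{condicion-ext} under the small perturbation $z\rightsquigarrow z+\epsilon$ of the nutrient profile. This is delicate because the correction function is itself the solution of a nonlinear delayed recursion, so one must first pin it uniformly between two positive constants and then show that its Cesàro-averaged deviation from $\varphi$ is as small as desired when $\epsilon$ is; the margin $\eta$ in the hypothesis is precisely what absorbs the resulting error. This is exactly what the preparatory lemmata of Section~\ref{technical} are meant to deliver.
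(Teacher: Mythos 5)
Your Steps 1 and 2 are sound (Step 1 is exactly Lemma \ref{lemaclave}, and the comparison $x_t\le\bar c_t$ is a routine induction), but the decisive Step 3 contains a genuine gap: you assert that for small $\epsilon$ the hypothesis \eqref{condicion-ext} is ``inherited'' by the perturbed data $(\bar\varphi,p(z+\epsilon))$, justifying this by ``uniform two-sided bounds for the correction functions and a Lipschitz-type control of the delayed-product recursion''. No result in Section \ref{technical} delivers that. Lemma \ref{lemaida} gives a \emph{lower} bound on the ratio of products under strict separation of the coefficients, and Lemma \ref{lemaextincion} gives, when $f\ge g$, the bound $(1+M)^{r-1}\prod(1+\varphi_k f_k)\ge\prod(1+\psi_k g_k)$ --- both in the direction opposite to what you need, since you must bound the product built from the \emph{larger} coefficients $p(z+\epsilon)$ from above by the one built from $p(z)$, up to a factor $(1+h(\epsilon))^{t_2-t_1}$ with $h(\epsilon)\to0$. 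Nor is there any estimate of $|\bar\varphi_t-\varphi_t|$: the window recursion defining the correction function can a priori amplify an $O(\epsilon)$ perturbation of the coefficients as $t$ grows, so continuity of $p$ alone does not close the argument. Your scheme can in fact be completed, but with Lemma \ref{nuevolemavuelta} applied at the level of the \emph{solutions} rather than the correction functions: comparing $\bar c$ (coefficient $p(z+\epsilon)$) with the paper's $c$ (coefficient $p(z)$), and using $|p(z_t+\epsilon)-p(z_t)|\le L\epsilon<p(\underline S)\le\inf p(z+\epsilon)$, that lemma gives $\bar c_t/c_t\le C(1+h(\epsilon))^{t/(r+1)}$, while \eqref{condicion-ext} forces $c_t$ to decay like $(1-\eta)^t$; choosing $\epsilon$ with $(1+h(\epsilon))^{1/(r+1)}(1-\eta)<1$ then yields $\bar c_t\to0$ and hence $x_t\to0$. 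As written, however, the key estimate is only announced, and announced with the wrong tool.

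For comparison, the paper's proof avoids the $\epsilon$-perturbation altogether. If the solution does not go extinct, then $\limsup_{t\to\infty}x_t>0$; since Lemma \ref{lemaclave} gives $z_t-s_t=x_t+y_t-(1-E)^t(s_0+x_0+y_0-z_0)$ and $x_t\ge x_{t_0}(1-E)^{t-t_0}$, one obtains $z_t\ge s_t$, hence $p(z_t)\ge p(s_t)$, for all large $t$. Then Lemma \ref{lemaextincion}, applied with $f=p(z)$, $g=p(s)$ and $\psi$ the correction function of the solution itself, combined with \eqref{expresionxt}, gives directly $x_{t_2+1}\le x_{t_1+1}(1+M)^{r-1}(1-\eta)^{t_2-t_1}\to0$, contradicting non-extinction. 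This route needs no auxiliary equation $\bar c$ and no robustness of \eqref{condicion-ext} under perturbation of $z$, which is precisely the point where your proposal is left open.
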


\subsection{{The neither-nor scenario: a case of neither persistence nor extinction}}

With the results of the preceding subsection in mind, let us  show with an example in which
trajectories are neither persistent 
nor they become extinct. To this end, let  $p(s)=s$, fix 
nonzero initial conditions  $(s^{in},x^{in})$ and consider the function
$$s_t^{0}=\left\{\begin{array}{cl}
    E^{-1}(1-E)^{-2r-2} &\text{ if }2^{2n}\leq t<2^{2n+1},\,n\in\N,  \\
    E/2 &\text{otherwise.} 
\end{array}\right.$$
Next, fix $T>0$ and consider $n\in\N$ large enough such that $T<2^{2n}$ and 
$$(1-E)^{2^{2n}}s_1^{0}<E/2.$$
We remark that  $s_1^{0}$ is an upper bound for the function $z_t$ defined by formula \eqref{construccionz} and that $s^{0}_t=E/2$ if $2^{2n+1}\leq t<2^{2(n+1)}.$  Therefore, for $3\cdot 2^{2n}\leq k<2^{2(n+1)}$, it is deduced that 
\begin{align*}
    z_k&=E\sum_{j=-\infty}^{k-1}(1-E)^{k-1-j}s^{0}_j\\
    &=E\sum_{j=2^{2n+1}}^{k-1}(1-E)^{k-1-j}s^{0}_j+E\sum_{j=-\infty}^{2^{2n+1}-1}(1-E)^{k-1-j}s^{0}_j\\
    &\leq E\frac{1}{E} E/2+E\frac{(1-E)^{2^{2n}}}{E}s^{0}_1\\
    &<E.
\end{align*}
Furthermore, 
\begin{align*}
    \prod_{k=3\cdot 2^{2n}+1+r}^{2^{2(n+1)}+r}(1 +\varphi_{k-r} z_{k-r}  )&< \prod_{k=3\cdot 2^{2n}+1}^{2^{2(n+1)}}(1 + z_{k}  ) <\left(1+E\right)^{ 2^{2n}  } <\left(\frac{1+\eta}{1-E}\right)^{ 2^{2n}  }
\end{align*}
for any $\eta>0$ and, by Theorem \ref{teoremapersistencia}, the system is not persistent.

It remains to show that the solution does not become extinct. Indeed, fix  $n\in\N$ and observe that 
$$s_{t}\geq Es_{t-1}^{0}\geq (1-E)^{-2r-2} $$
if  $2^{2n}< t\leq 2^{2n+1}$. As before,  
$$x_{t}\geq (1-E)^{t}x_0 $$
for all $t\geq 0$. Therefore, set $m=m(n)$ the integer such that  $m(r+1)< 2^{2n}\leq (m+1)(r+1)$. This implies that $s_{2^{2n+1}-m(r+1)}\geq (1-E)^{-2r-2} $ and, finally, 
\begin{align*}
    x_{2^{2n+1}}&=(1-E)x_{2^{2n+1}-1}+x_{2^{2n+1}-(r+1)}s_{2^{2n+1}-(r+1)}(1-E)^{r+1}\\
    &\geq x_{2^{2n+1}-(r+1)}s_{2^{2n+1}-(r+1)}(1-E)^{r+1}\\
    &\geq  x_{2^{2n+1}-2(r+1)}s_{2^{2n+1}-2(r+1)}(1-E)^{r+1}s_{2^{2n+1}-(r+1)}(1-E)^{r+1}\\
    & =x_{2^{2n+1}-m(r+1)}\prod_{k=1}^m s_{2^{2n+1}-k(r+1)}(1-E)^{r+1}\\
    & >(1-E)^{2^{2n+1} -m(r+1)}x_0 \prod_{k=1}^m  (1-E)^{-2r-2}(1-E)^{r+1}\\
    &= (1-E)^{2^{2n+1}-2m(r+1)}x_0  \\
    &\geq (1-E)^{2(r+1)}x_0
\end{align*}
by definition of $m$. Since this bound does not depend on $n$, we conclude that  
$$\limsup_{t\to +\infty}x_t>0.$$

\subsection{Periodic case}

Here, it shall be assumed that $s^0$ is $\omega$-periodic for some $\omega\in \mathbb N$. 
It is readily verified that $z$ is 
$\omega$-periodic and, imitating the procedure in \cite{cartabia2023persistence}, we may choose $c$ in  such a way that $\varphi$ is also $\omega$-periodic. 
Thus,  the values $\overline \beta$ 
and $\underline \beta$ previously defined are equal and   
 coincide with the geometric mean of the function $(1-E) (1+\varphi p(z))$, namely
$$\langle (1-E) (1+\varphi p(z))
\rangle:= \left(\prod_{k=0}^{\omega-1}[(1-E)(1 +\varphi_{k}p(z_k))] \right)^{\frac 1{\omega}}.
$$

\begin{thm}\label{persitenciaperiodico}
In the previous context, the following statements are equivalent: 

\begin{enumerate}\label{existenciadesolucion}
    \item System \eqref{modeloprincipal} admits at least one positive $\omega$-periodic  solution. 
    \item System \eqref{modeloprincipal} has a unique positive $\omega$-periodic  solution, which attracts geometrically all the nontrivial trajectories. 
\item System \eqref{modeloprincipal} is persistent. 
\item System \eqref{modeloprincipal} is uniformly persistent. 
\item At least one nontrivial solution persists. 
\item $\langle (1-E) (1+\varphi p(z))
\rangle>1.$
    
\end{enumerate}
    
\end{thm}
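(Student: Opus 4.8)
The plan is to establish the cycle of implications $(6)\Rightarrow(2)\Rightarrow(1)\Rightarrow(5)$, together with the equivalences $(3)\Leftrightarrow(4)\Leftrightarrow(5)$ which are already contained in Theorem \ref{teoremapersistencia}, and finally $(5)\Rightarrow(6)$. Since $s^0$, hence $z$, hence (after the appropriate choice of $c$) $\varphi$, are all $\omega$-periodic, the product over one period satisfies $\prod_{k=0}^{\omega-1}(1-E)(1+\varphi_k p(z_k)) = \langle (1-E)(1+\varphi p(z))\rangle^{\omega}$, and the $\liminf/\limsup$ defining $\underline\beta$ and $\overline\beta$ collapse to this single geometric mean. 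Thus condition \ref{d} of Theorem \ref{teoremapersistencia} (i.e. $\underline\beta>1$) is equivalent to statement (6) here, which immediately gives $(6)\Leftrightarrow(3)\Leftrightarrow(4)\Leftrightarrow(5)$ by invoking that theorem. The real work is therefore to produce the periodic solution in statement (2) from (6), and to derive its uniqueness and global attractiveness.

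For $(6)\Rightarrow(2)$: assuming $\langle(1-E)(1+\varphi p(z))\rangle>1$, I would first use Theorem \ref{teoremapersistencia} to get uniform persistence, and the a priori bounds already available (the substrate and biomass are bounded above, and $x$ is bounded below by $\delta$ eventually) to confine the dynamics of \eqref{modeloprincipal} to a compact, forward-invariant region $\mathcal{K}\subset(0,\infty)^{r+1}\times(0,\infty)^{r+1}$ for the phase space of initial segments of length $r+1$. The time-$\omega$ (Poincaré) map $P$ of the non-autonomous periodic system then maps $\mathcal{K}$ into itself, and by Brouwer's fixed point theorem (after checking continuity and that $\mathcal{K}$ is convex, or passing to its convex hull and an iterate) $P$ has a fixed point, which corresponds to a positive $\omega$-periodic solution. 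This gives (1) as well, and (1)$\Rightarrow$(5) is trivial since a positive periodic solution obviously persists. Alternatively, and perhaps more cleanly, one can obtain the periodic solution directly as a corollary of Theorem \ref{atractividad}: any positive trajectory attracts all others geometrically, so the $\omega$-translate of a positive solution and the solution itself converge together; a standard Cauchy/limiting argument on the Poincaré map then yields a genuine fixed point, i.e. a periodic solution, and simultaneously its uniqueness (two periodic solutions would have to attract each other, forcing them to coincide).

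The attractiveness claim in (2) — that the unique periodic solution attracts \emph{geometrically} all nontrivial trajectories — follows from Theorem \ref{atractividad}, which already asserts that any positive solution attracts all positive trajectories geometrically; one just takes the periodic solution as the distinguished attractor, and notes that every nontrivial trajectory becomes positive and enters the persistence region after finite time (using Lemma \ref{posit}-type positivity together with uniform persistence), so the geometric attraction transfers to it. The main obstacle I anticipate is the construction step $(6)\Rightarrow(1)$: one must be careful that the invariant compact set $\mathcal{K}$ is genuinely forward-invariant under the Poincaré map and bounded away from the coordinate hyperplanes (so the fixed point is \emph{positive}, not a washout-type degenerate solution), and that the relevant map is continuous on $\mathcal{K}$ — the delay makes the phase space $(r+1)$-dimensional in each variable, so the compactness and invariance estimates must be stated for the whole history segment, not just the current value. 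Handling this, or else routing around it entirely via the contraction/attraction argument of Theorem \ref{atractividad}, is where the proof needs the most care; everything else is bookkeeping with the periodicity of $z$ and $\varphi$ and quoting the two general theorems.
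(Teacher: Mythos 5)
Your bookkeeping for the equivalences is the same as the paper's: in the periodic case the Bohl exponents collapse to the geometric mean, so statement (6) is exactly condition \ref{d} of Theorem \ref{teoremapersistencia}, and the only genuine content is that (6) produces a positive $\omega$-periodic solution; uniqueness and geometric attraction then come from Theorem \ref{atractividad}, as you say. Where you diverge from the paper is the existence step. The paper does not use Brouwer on an invariant set: it applies Horn's fixed point theorem to the Poincar\'e operator on a triple of nested convex sets $s_0\subset s_1\subset s_2$ in the space of history segments, chosen so that $P^k(s_1)\subset s_2$ for all $k$ and $P^k(s_1)\subset s_0$ for $k$ large. This choice is made precisely because your first route has a real gap: uniform persistence (condition \ref{c}) gives the lower bound $x_t\ge\delta$ only after a transient $T(R,\alpha)$ depending on the initial data, and the upper bounds coming from Lemma \ref{lemaclave} are likewise only eventual, so there is no readily available compact convex set bounded away from $\{x=0\}$ that $P$ maps into itself. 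If you repair this by passing to an iterate $P^m$, Brouwer only yields an $m\omega$-periodic solution, which is not statement (1) without an extra argument. Horn's theorem is tailored exactly to this eventual-absorption situation, which is why the paper reaches for it.

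Your alternative route, however, is sound and genuinely different from the paper's: take a positive solution, observe that its time-$\omega$ shift solves the same ($\omega$-periodic) system, and use the geometric attraction of Theorem \ref{atractividad} to conclude that the Poincar\'e iterates $P^k\phi_0$ form a Cauchy sequence; the limit is a fixed point of $P$ by continuity of the system, its $x$-component is at least $\delta$ by uniform persistence (so the fixed point is positive rather than of washout type), and uniqueness and attractiveness follow again from Theorem \ref{atractividad}. This buys existence, uniqueness and attraction in one stroke with no fixed point theorem beyond completeness, at the price of invoking the full strength of the attraction theorem; the paper's Horn argument needs only the persistence estimates and a priori bounds for the existence step, and quotes Theorem \ref{atractividad} afterwards for uniqueness. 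If you write this up, carry out the second route (or the Horn argument), not the Brouwer sketch.
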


\begin{thm}\label{extincionperiodico}
    If $\langle (1-E) (1+\varphi p(z))
\rangle\le 1$, then every trajectory of \eqref{modeloprincipal}
goes to extinction. \end{thm}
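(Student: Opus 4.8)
I would split the argument according to whether $\langle(1-E)(1+\varphi p(z))\rangle$ is strictly below $1$ or exactly $1$; the first alternative reduces to Theorem~\ref{extinciongeneral}, the second is the genuinely new case. Suppose first $\rho:=\langle(1-E)(1+\varphi p(z))\rangle<1$. Since $z$ and $\varphi$ are $\omega$-periodic, $k\mapsto 1+\varphi_{k-r}p(z_{k-r})$ is $\omega$-periodic, so for any window $[t_1+1,t_2]$ of length $L:=t_2-t_1$ (writing $L=q\omega+s$ with $0\le s<\omega$, the full periods contributing $(\rho/(1-E))^{q\omega}$ and the remaining $\le\omega$ factors being bounded) one gets
$$\prod_{k=t_1+1}^{t_2}\bigl(1+\varphi_{k-r}p(z_{k-r})\bigr)\le C\left(\frac{\rho}{1-E}\right)^{L}$$
with $C$ depending only on $\omega,E$ and $\sup p$. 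Choosing $\eta>0$ with $\rho<1-\eta<1$ one has $C(\rho/(1-\eta))^{L}<1$ for all $L$ beyond some $T$, i.e. \eqref{condicion-ext} holds, and Theorem~\ref{extinciongeneral} gives extinction.

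\textbf{The critical case: conservation law and the $w$-substitution.} Now assume $\langle(1-E)(1+\varphi p(z))\rangle=1$; since the product of $(1-E)(1+\varphi_{k-r}p(z_{k-r}))$ over $\omega$ consecutive indices equals $1$, the solution $c$ of \eqref{pseudo-linear} with $c_0=1$ is itself $\omega$-periodic, hence bounded above and below by positive constants. The first ingredient I would establish is a conservation law: with $v_t:=s_t+x_t+y_t$ and $y$ as in \eqref{definicionY}, a direct computation from \eqref{modeloprincipal} (the term $x_t p(s_t)$ from $y_{t+1}$ cancels the one subtracted in the $s$-equation, and the remaining terms reassemble into $(1-E)y_t$) shows $v_{t+1}=(1-E)v_t+Es^0_t$, the same equation satisfied by $z$. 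Hence $v_t-z_t=(1-E)^t(v_0-z_0)\to0$ geometrically; in particular all nonnegative solutions are bounded, and since $x_t,y_t\ge0$,
$$s_{t-r}\le z_{t-r}+|v_0-z_0|(1-E)^{t-r}-x_{t-r}.$$
The second ingredient is the change of variables $w_t:=x_t/c_t$ (so $x_t\to0\iff w_t\to0$): dividing the $x$-equation by the $c$-equation and using \eqref{definicionPhi}--\eqref{pseudo-linear} yields the identity
$$w_{t+1}\bigl(1+\varphi_{t-r}p(z_{t-r})\bigr)=w_t+\varphi_{t-r}\,p(s_{t-r})\,w_{t-r},$$
so that, apart from the discrepancy between $p(s_{t-r})$ and $p(z_{t-r})$, $w_{t+1}$ is a convex combination of $w_t$ and $w_{t-r}$ with weights bounded away from $0$ and from $1$, because $\varphi$ is bounded below by a positive constant and $p(z)$ lies between two positive constants.

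\textbf{Closing the estimate.} Plugging the above bound on $s_{t-r}$ into the identity, and using that $p$ is Lipschitz and has a positive derivative on the relevant compact interval together with $c_t\ge c_->0$, one obtains $p(s_{t-r})\le p(z_{t-r})-\lambda w_{t-r}+C(1-E)^{t-r}$ for suitable $\lambda,C>0$; since $\varphi\le1$ and $w$ is bounded this turns the identity into
$$w_{t+1}\le(1-\theta_t)w_t+\theta_t w_{t-r}-\kappa_0 w_{t-r}^2+C'(1-E)^{t-r},\qquad \theta_t\in[\underline{\theta},\overline{\theta}]\subset(0,1),\ \ \kappa_0>0.$$
Let $\ell:=\limsup_t w_t$ and suppose, for contradiction, $\ell>0$. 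Because the map is essentially a convex combination with weights in a fixed subinterval of $(0,1)$, if $w_{t+1}\ge\ell-\epsilon$ for large $t$ then (using $w_t,w_{t-r}\le\ell+o(1)$) necessarily $w_{t-r}\ge\ell-O(\epsilon)$; feeding this back into the displayed inequality forces $w_{t+1}\le\ell-\kappa_0\ell^2/8$ once $\epsilon$ and the error terms are small, contradicting $w_{t+1}\ge\ell-\epsilon$ as soon as $\epsilon<\kappa_0\ell^2/8$. Since $\limsup_t w_t=\ell$ forces $w_{t+1}\ge\ell-\epsilon$ for arbitrarily large $t$, this contradiction shows $\ell=0$, whence $w_t\to0$ and $x_t=c_tw_t\to0$ for every nonnegative solution.

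\textbf{Main obstacle.} The subcritical case and the derivations of the conservation law and of the $w$-identity are routine bookkeeping; the delicate point is the last step, which is where criticality is genuinely used: one must exploit that the neutral recursion $w_{t+1}=(1-\theta_t)w_t+\theta_t w_{t-r}$ contracts the oscillation of $w$, while the consumption term produces a non-vanishing quadratic defect precisely when $w$ sits near its $\limsup$. Keeping the weights $\theta_t$ uniformly inside $(0,1)$, handling the delay correctly (this is why one really works with $\max(w_{t-r},\dots,w_t)$ rather than $w_t$ alone when tracking monotonicity) and checking that all perturbation terms are summable are exactly the estimates I would relegate to the technical lemmata of Section~\ref{technical}.
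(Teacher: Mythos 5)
Your proposal is correct, and for the heart of the theorem (the critical case $\langle (1-E)(1+\varphi p(z))\rangle=1$) it follows a genuinely different route from the paper. The subcritical reduction to Theorem \ref{extinciongeneral} via periodicity is exactly what the paper does implicitly ("it only remains to analyze the case of equality"). For equality, the paper argues by contradiction on the quantity $\prod_{k}\frac{1+\psi_k p(s_k)}{1+\varphi_k p(z_k)}$: if it does not tend to zero, the comparison Lemma \ref{lemaextincion} shows it is bounded below along \emph{all} times, whence by \eqref{expresionxt} and periodicity $x_t$ has positive infimum; Theorem \ref{teoremapersistencia} then upgrades this single persistent trajectory to the Bohl condition \eqref{condicionpersistencia}, i.e. geometric mean $>1$, contradicting equality, and extinction follows by factoring $x_{t+1}$ through the (bounded) periodic product. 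You instead give a self-contained fluctuation argument: the conservation law (the paper's Lemma \ref{lemaclave}), the substitution $w_t=x_t/c_t$ with the $\omega$-periodic $c$ (so $c$ is bounded above and below, as you note), the identity $w_{t+1}(1+\varphi_{t-r}p(z_{t-r}))=w_t+\varphi_{t-r}p(s_{t-r})w_{t-r}$ (which checks out against \eqref{definicionPhi}--\eqref{pseudo-linear}), and the quadratic defect $-\kappa_0 w_{t-r}^2$ coming from $s_{t-r}\le z_{t-r}-x_{t-r}+O((1-E)^{t-r})$ together with $\inf p'>0$ on the relevant compact interval and $\inf\varphi>0$, $\inf p(z)>0$ (all available under the paper's standing hypotheses); your $\limsup$ argument then closes correctly, since $w_{t+1}$ close to $\ell$ forces $w_{t-r}\ge\ell-O(\epsilon)$ (here $\theta_t\ge\underline\theta>0$ is what matters), and the defect $\kappa_0\ell^2$ yields the contradiction. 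What each approach buys: the paper's proof is shorter and reuses already-established machinery (Theorem \ref{teoremapersistencia} as a persistence-implies-supercriticality device, plus Lemma \ref{lemaextincion}), whereas yours is independent of Theorem \ref{teoremapersistencia} and of Lemma \ref{lemaextincion}, is closer in spirit to the continuous-time arguments, and exhibits the quantitative mechanism (consumption by a non-vanishing biomass depresses $s$ strictly below $z$, producing a uniform quadratic loss at criticality); the price is the extra bookkeeping you flag, all of which is indeed fillable with the bounds already present in the paper's technical section.
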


\section{Technical results}
\label{technical}

 This section is devoted to establish a series of technical results that shall be employed for the proofs of the main theorems. 
 
\begin{lem}\label{lemaclave}
Let $(s_t,x_t)$ be a solution of system \eqref{modeloprincipal}, let $y_t$ be given by \eqref{definicionY} and let $z_t$ be given by \eqref{construccionz}. Then
$$s_{t+1}+x_{t+1}+y_{t+1}-z_{t+1}=(1-E)^{t+1}(s_0+x_0+y_0-z_0).$$ 
{In particular, if 
$s_0+x_0+y_0\le z_0$, then $s_t+x_t+y_t\le z_t$ for all $t\ge 0$.} 
\end{lem}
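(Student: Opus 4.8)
The plan is to compute directly the quantity $w_t := s_t + x_t + y_t - z_t$ and show it satisfies the homogeneous linear recursion $w_{t+1} = (1-E)w_t$; the identity then follows by iteration. First I would write down $s_{t+1}$, $x_{t+1}$, $y_{t+1}$ and $z_{t+1}$ using, respectively, the first equation of \eqref{modeloprincipal}, the second equation of \eqref{modeloprincipal}, the definition \eqref{definicionY} of $y$, and the recursion \eqref{eq: 4.4} for $z$. Adding these, the term $Es^0_t$ coming from the $s$-equation should cancel against the $Es^0_t$ appearing (with opposite sign) in $z_{t+1}=(1-E)z_t+Es^0_t$, leaving only terms proportional to $(1-E)$ times quantities at time $t$.

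The key bookkeeping step is the telescoping between the consumption term in the $s$-equation and the freshly produced/released biomass terms. Concretely, the $s$-equation contributes $-(1-E)x_t p(s_t)$; the $x$-equation contributes $+(1-E)^{r+1}x_{t-r}p(s_{t-r})$; and in $y_{t+1}-$ ``$(1-E)y_t$'' one should see, from shifting the index in \eqref{definicionY}, that $y_{t+1} - (1-E)y_t = (1-E)x_t p(s_t) - (1-E)^{r+1}x_{t-r}p(s_{t-r})$ — precisely the two leftover nonlinear terms with the opposite sign. I would verify this by writing $y_{t+1}=\sum_{k=0}^{r-1}x_{t-k}p(s_{t-k})(1-E)^{k+1}$ and $(1-E)y_t=\sum_{k=0}^{r-1}x_{t-1-k}p(s_{t-1-k})(1-E)^{k+2}=\sum_{j=1}^{r}x_{t-j}p(s_{t-j})(1-E)^{j+1}$, so the difference is the $k=0$ term of the first sum minus the $j=r$ term of the second, i.e. $(1-E)x_tp(s_t)-(1-E)^{r+1}x_{t-r}p(s_{t-r})$, as claimed. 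After all cancellations, $w_{t+1}=(1-E)(s_t+x_t+y_t-z_t)=(1-E)w_t$, and induction gives $w_t=(1-E)^t w_0$, which is the displayed identity (shifting the index by one to match the statement).

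For the ``in particular'' clause: if $s_0+x_0+y_0\le z_0$, then $w_0\le 0$, hence $w_t=(1-E)^t w_0\le 0$ for every $t\ge 0$ since $(1-E)^t>0$, i.e. $s_t+x_t+y_t\le z_t$. The main (and only real) obstacle is getting the index shifts in the telescoping of $y$ exactly right, together with the edge case $r=0$, where the sum defining $y_{t+1}$ is empty so $y\equiv 0$ and the identity collapses to the elementary fact that $s+x-z$ decays like $(1-E)^t$; this matches the undelayed computation recalled in Remark \ref{continuous}. Everything else is routine algebra, so I would keep the write-up to the one-line recursion plus the telescoping lemma for $y$.
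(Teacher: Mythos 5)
Your proposal is correct and follows essentially the same route as the paper: you verify, via the index-shift telescoping of the sum defining $y$, that $s_t+x_t+y_t$ satisfies the same linear recursion \eqref{eq: 4.4} as $z_t$, so the difference $w_t$ decays like $(1-E)^t$, and the sign claim follows since $(1-E)^t>0$. The paper performs exactly this algebra by summing the three update formulas directly, so there is no substantive difference in approach.
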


\begin{proof}
Observe that   
\begin{align*}
  s_{t+1}+x_{t+1}+y_{t+1}&= Es_{t }^0+(1-E)(s_{t }-x_{t }p(s_{t }))\\
  &\quad +(1-E)x_{t-1}+x_{t-r}p(s_{t-r})(1-E)^{r+1}\\
  &\quad +\sum_{k=0}^{r-1} x_{t-k } p(s_{t-k })(1-E)^{k+1}\\
  &= Es_{t}^0+(1-E)(s_{t}+x_{t})+\sum_{k=1}^{r } x_{t-k } p(s_{t-k })(1-E)^{k+1}\\
  &= Es_{t}^0+(1-E)(s_{t}+x_{t})+y_{t}(1-E) \\
  &=Es^0_{t}+(1-E)(s_{t}+x_{t}+y_{t}).
\end{align*}
This proves that the function 
$s_{t}+x_{t}+y_{t}$ is a solution of (\ref{eq: 4.4}), which yields the identity 
$$z_{t+1} - (s_{t+1}+x_{t+1}+y_{t+1}) =(1-E)^{t+1}
[z_{0} - (s_{0}+x_{0}+y_{0})]
$$
and the conclusion follows.
\end{proof}

\begin{lem}
\label{posit}
Assume that $z_t p'(\xi) \le 1$ for all $t$ and all $\xi$. 
Then for any nonnegative initial conditions  with $s_0+x_0 + y_0 \le z_0$, the trajectory satisfies $x_t\ge 0$ and $s_t>0$ for all $t> 0$. 
\end{lem}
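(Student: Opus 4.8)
The plan is to argue by strong induction on $t$, proving simultaneously that $x_t\ge 0$ and $s_t\ge 0$ for every $t\ge -r$, with the strict inequality $s_t>0$ once $t\ge 1$. For $-r\le t\le 0$ both inequalities are part of the hypothesis on the initial data, which also makes $y_0$ (given by \eqref{definicionY} with the index shifted back) a nonnegative number. Before entering the induction I would record two standing facts. First, since $p(0)=0$ and $p'>0$, the map $p$ is nonnegative on $[0,+\infty)$. Second, because $s^0_t$ is bounded below by a positive constant, formula \eqref{construccionz} forces $z_t$ to be bounded below by that same constant, so $z_t>0$ for all $t$ and division by $z_t$ is legitimate. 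Finally, Lemma \ref{lemaclave} applied with $s_0+x_0+y_0\le z_0$ gives
$$s_t+x_t+y_t\le z_t\qquad\text{for all }t\ge 0,$$
an identity that is purely algebraic and carries no sign requirement.

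For the inductive step, suppose $x_j\ge 0$ and $s_j\ge 0$ for all $-r\le j\le n$. Nonnegativity of $x_{n+1}$ is immediate: in $x_{n+1}=(1-E)x_n+x_{n-r}p(s_{n-r})(1-E)^{r+1}$ each of the two terms is a product of nonnegative quantities, using $1-E>0$, the inductive hypothesis (note $n-r\ge -r$), and nonnegativity of $p$. The substantive point is $s_{n+1}>0$, for which it suffices to show $s_n-x_np(s_n)\ge 0$, since then $s_{n+1}=Es^0_n+(1-E)(s_n-x_np(s_n))\ge Es^0_n>0$. Here I would first note that $y_n$ is a sum of terms $x_{n-1-k}p(s_{n-1-k})(1-E)^{k+1}$ with $0\le k\le r-1$, hence involves only indices in $\{-r,\dots,n-1\}$, where nonnegativity is already known; thus $y_n\ge 0$, and combining with $s_n\ge 0$ and the displayed inequality from Lemma \ref{lemaclave} yields $x_n\le z_n$. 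Then, using nonnegativity of $p$, the fundamental theorem of calculus, and the hypothesis $z_np'(\xi)\le 1$ (i.e. $p'(\xi)\le z_n^{-1}$),
$$x_n p(s_n)\le z_n p(s_n)=z_n\int_0^{s_n}p'(\xi)\,d\xi\le s_n,$$
which is exactly the inequality needed. This closes the induction, and in particular $x_t\ge 0$ and $s_t>0$ for all $t>0$.

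The argument is mostly bookkeeping, and the one place that needs care is the ordering inside the inductive step: to estimate $x_np(s_n)$ one must first know $x_n\le z_n$, which relies on $y_n\ge 0$, which in turn relies on nonnegativity of $x$ and $s$ at the earlier indices $n-r,\dots,n-1$. This is precisely why the sign information must be propagated along the induction (and why the initial data is required to be nonnegative on the whole block $\{-r,\dots,0\}$, not merely at $t=0$), rather than being recovered only at the end from Lemma \ref{lemaclave}. Once $x_n\le z_n$ is available, the hypothesis $z_tp'(\xi)\le 1$ does all the remaining work.
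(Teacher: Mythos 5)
Your proof is correct and follows essentially the same route as the paper: induction in $t$, using Lemma \ref{lemaclave} together with the nonnegativity of $s_n$ and $y_n$ to obtain $x_n\le z_n$, and then the hypothesis $z_tp'(\xi)\le 1$ to conclude $s_n-x_np(s_n)\ge 0$ and hence $s_{n+1}\ge Es^0_n>0$. The only cosmetic difference is that you bound $p(s_n)$ by the fundamental theorem of calculus while the paper invokes the mean value theorem, and you make explicit the bookkeeping ($y_n\ge 0$ from the earlier indices) that the paper leaves implicit.
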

\begin{proof}
    By induction, assume that the conclusion holds for all 
    $j\le t$ and observe, by the previous lemma, that 
    $s_t+x_t + y_t \le z_t$ which, in particular, implies $x_t\le z_t$.  
    It is clear that $x_{t+1}\ge 0$ and, moreover, for some mean value $\xi$ it is seen that
    $$s_{t+1} = 
    Es_t^0 + (1-E) (s_t - x_tp'(\xi)s_t)
$$
$$> (1-E) (1 - z_tp'(\xi))s_t\ge 0.$$
\end{proof}
The following result is based on  \cite[Lemma 4.3]{cartabia2023persistence}. 
\begin{lem}\label{lemaida}
Consider the functions defined by recursion as
\begin{align*}
  \varphi_{t+1}&:=   \prod_{k=t+1-r}^{t}(1 +\varphi_kf_k)^{-1},\\
  \psi_{t+1}&:=  \prod_{k=t+1-r}^{t}(1 +\psi_kg_k)^{-1} 
\end{align*}
with $0\le f_t \le M$  for all $t\geq 0$ and  $f_t,\,g_t,\,\varphi_t,\,\psi_t $ nonnegative for all $t\geq 1-r$. Furthermore, assume  there exists a constant  $\varepsilon>0$ such that  $f_t>g_t+\varepsilon$ for all $t\geq 0$. 
Then there exist constants $\eta>0$ and $T\geq  r$ such that  
$$ \prod_{k=t_1+1}^{t_2}\frac{1 +\varphi_{k-r}f_{k-r}}{1 +\psi_{k-r}g_{k-r}}>(1+\eta)^{2(t_2-t_1) }$$
for all  $t_1\geq T$ and $t_2-t_1\geq T$.
\end{lem}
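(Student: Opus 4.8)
The plan is to linearise the problem through an auxiliary pair of ``fundamental solutions''. I would set, for $t\ge 1$,
$$C_t:=\prod_{k=1-r}^{t-1-r}(1+\varphi_kf_k),\qquad D_t:=\prod_{k=1-r}^{t-1-r}(1+\psi_kg_k)$$
(empty products being $1$). These are positive and nondecreasing, they satisfy $C_{t+1}/C_t=1+\varphi_{t-r}f_{t-r}$ and $D_{t+1}/D_t=1+\psi_{t-r}g_{t-r}$, and—unwinding the recursions that define $\varphi$ and $\psi$—one gets, for $t\ge r+1$, that $\varphi_{t-r}=C_{t-r}/C_t$, $\psi_{t-r}=D_{t-r}/D_t$, and hence that $C$ and $D$ solve the linear delay equations $C_{t+1}=C_t+C_{t-r}f_{t-r}$ and $D_{t+1}=D_t+D_{t-r}g_{t-r}$. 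Writing $Q_t:=C_t/D_t$, the product on the left telescopes:
$$\prod_{k=t_1+1}^{t_2}\frac{1+\varphi_{k-r}f_{k-r}}{1+\psi_{k-r}g_{k-r}}=\frac{C_{t_2+1}/C_{t_1+1}}{D_{t_2+1}/D_{t_1+1}}=\frac{Q_{t_2+1}}{Q_{t_1+1}},$$
so everything reduces to showing that $Q$ grows geometrically.

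First I would record elementary bounds. Since every factor $1+\varphi_kf_k$ and $1+\psi_kg_k$ lies in $[1,1+M]$ (note $g_k<f_k\le M$), it follows that $\varphi_t,\psi_t\in[(1+M)^{-r},1]$ for $t\ge r+1$ and, because $C,D$ are nondecreasing, that $C_{t+1}/C_t,\,D_{t+1}/D_t\in[1,1+M]$; consequently $Q$ is slowly varying, $Q_s/Q_{s'}\le(1+M)^{|s-s'|}$. The crucial point will be the one-step comparison
$$Q_{t+1}\ \ge\ (1+c)\,\min\{Q_t,Q_{t-r}\},\qquad c:=\frac{\varepsilon}{(1+M)^{r+1}}>0,$$
valid for $t$ large. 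To see it, dividing $C_{t+1}=C_t+C_{t-r}f_{t-r}$ and $D_{t+1}=D_t+D_{t-r}g_{t-r}$ by $D_t$ gives $Q_{t+1}=\bigl(Q_t+Q_{t-r}\psi_{t-r}f_{t-r}\bigr)/\bigl(1+\psi_{t-r}g_{t-r}\bigr)$; writing $a:=\psi_{t-r}g_{t-r}$ and splitting off the ``$f-g$'' part,
$$Q_{t+1}=\frac{Q_t+aQ_{t-r}}{1+a}+\frac{\psi_{t-r}(f_{t-r}-g_{t-r})}{1+a}\,Q_{t-r},$$
the first term is a convex combination of $Q_t$ and $Q_{t-r}$, hence $\ge\min\{Q_t,Q_{t-r}\}$, while the coefficient in the second term is $\ge(1+M)^{-r}\varepsilon/(1+M)=c$ and $Q_{t-r}\ge\min\{Q_t,Q_{t-r}\}$; adding the two estimates yields the claim. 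This is exactly where the hypothesis $f_t>g_t+\varepsilon$ is used, and the comparison must be with $\min\{Q_t,Q_{t-r}\}$ precisely because $Q$ itself need not be monotone—so a naive pointwise lower bound on $Q_{t+1}/Q_t$ is hopeless.

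Finally I would bootstrap. Put $m_t:=\min_{t-r\le s\le t}Q_s$. From $Q_{t+1}\ge(1+c)m_t>m_t$ one checks that $m_t$ is eventually nondecreasing and that $m_{t+r+1}\ge(1+c)m_t$ (each $Q_s$ with $s\in(t,t+r+1]$ is $\ge(1+c)m_{s-1}\ge(1+c)m_t$); iterating, $m_{t_2+1}\ge(1+c)^{\lfloor(t_2-t_1)/(r+1)\rfloor}m_{t_1+1}$. Combining this with $Q_{t_2+1}\ge m_{t_2+1}$ and, via slow variation, $Q_{t_1+1}\le(1+M)^{r}m_{t_1+1}$, I obtain
$$\frac{Q_{t_2+1}}{Q_{t_1+1}}\ \ge\ \frac{(1+c)^{\lfloor(t_2-t_1)/(r+1)\rfloor}}{(1+M)^{r}},$$
which exceeds $(1+\eta)^{2(t_2-t_1)}$ as soon as $\eta>0$ is chosen with $(1+\eta)^{2(r+1)}\le(1+c)^{1/2}$ and $T$ is taken large enough that $t_1$ lies beyond the range where the elementary bounds apply and $t_2-t_1$ is large enough to swallow the fixed factor $(1+M)^{r}$ and the loss in the floor. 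I expect the two genuinely new ingredients—passing to the fundamental solutions $C,D$, and the one-step comparison against $\min\{Q_t,Q_{t-r}\}$—to be the crux; the elementary bounds, the sliding-minimum bootstrap, and the final choice of $\eta$ and $T$ are then routine bookkeeping.
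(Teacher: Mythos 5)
Your proposal is correct, but it follows a genuinely different route from the paper's proof. You lift the problem to the ``fundamental solutions'' $C_t$, $D_t$ of the linear delay recursions $C_{t+1}=C_t+C_{t-r}f_{t-r}$, $D_{t+1}=D_t+D_{t-r}g_{t-r}$ (which is natural, since in the application these are exactly the roles of $c_t$ and $x_t$), reduce the product to the ratio $Q_{t_2+1}/Q_{t_1+1}$ with $Q=C/D$, and drive the growth through the single inequality $Q_{t+1}\ge(1+c)\min\{Q_t,Q_{t-r}\}$ with $c=\varepsilon(1+M)^{-(r+1)}$, obtained by splitting off the $f-g$ part and recognizing the remaining term as a convex combination of $Q_t$ and $Q_{t-r}$; a sliding-window minimum $m_t=\min_{t-r\le s\le t}Q_s$ then bootstraps this into geometric growth, with the boundary loss $(1+M)^r$ absorbed by taking $T$ large. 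The paper instead works directly with the factors $\frac{1+\varphi_kf_k}{1+\psi_kg_k}$: it introduces the threshold $\kappa=1+\frac{\varepsilon}{2M}$, shows in a two-case analysis that either a whole window of length $r$ of such factors exceeds $(1+\eta)^{3r}$ (when $\kappa\varphi_t\le\psi_t$) or the individual factor exceeds $(1+\eta)^3$ (when $\kappa\varphi_t\ge\psi_t$), and then stitches the interval $[t_1,t_2]$ together via a constructed decreasing sequence $(h_n)$, again absorbing a fixed $(1+M)^{r-1}$ loss because $t_2-t_1\ge T$. Both arguments rest on the same ingredients --- the a priori bounds $\varphi_t,\psi_t\in[(1+M)^{-r},1]$ for large $t$ and the uniform gap $\varepsilon$ --- but your convexity/minimum trick replaces the paper's case split and the somewhat delicate construction of $(h_n)$, which arguably makes the combinatorics cleaner; the paper's version, on the other hand, stays entirely at the level of $\varphi,\psi$ and is the same scheme it reuses (with $\kappa=1$) in its Lemma 5.5. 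Only minor bookkeeping caveats apply to your write-up: the identities $\varphi_{t-r}=C_{t-r}/C_t$, $\psi_{t-r}=D_{t-r}/D_t$ and the bounds $\varphi_t,\psi_t\ge(1+M)^{-r}$ hold only once the indices are past the initial data (roughly $t\ge r+1$, hence the one-step comparison from $t\ge 2r+1$ on), so the final $T$ must be chosen beyond that range as well as large enough for the exponent comparison --- which you acknowledge, and which costs nothing since the lemma only asserts existence of some $T$.
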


\begin{proof}
Define the positive constants  
\begin{align*}
  m&:=(1+M) ^{-r+1},\\
  \kappa&:=1+\frac{\varepsilon}{2M}
\end{align*}  
  and $\eta, T$ as the unique solutions of the respective equations
  $$
 \quad (1+\eta)^3= \min\left\{\left(\frac{\kappa/m+M+\varepsilon}{1/m+M}\right)^{1/r},1+\frac{m \varepsilon/2}{1+M}\right\},$$
 $$
   (1+\eta)^{T-3(r-1)}=(1+M)^{r-1}.$$
Further, observe  that   $\varphi_t\leq  1$ implies $\varphi_t\geq  m$.
We split the proof into three steps. 
\medskip 

\textit{Step 1.} Let $t\geq r$, then  there are two cases to study. If $\kappa\varphi_t\leq \psi_t$, then
\begin{align*}
  \prod_{k=t+1-r}^{t}\frac{1+\varphi_kf_k}{1+\psi_kg_k}&=\left(\frac{1+\varphi_tf_t}{1+\psi_tg_t}\right)\prod_{k=t+1-r}^{t-1}\frac{1+\varphi_kf_k}{1+\psi_kg_k}\\
   &=\left(\frac{1+\varphi_tf_t}{1+\psi_tg_t}\right)\frac{\psi_t}{\varphi_t}\\
   &=\frac{1/\varphi_t+f_t}{1/\psi_t+g_t}\\
   &>\frac{\kappa /\psi_t+g_t+\varepsilon}{1/\psi_t+g_t}\\
   &\geq\frac{\kappa /\psi_t+M+\varepsilon}{1/\psi_t+M},
\end{align*}
where the latter inequality holds because  
$$\frac{d}{dz}\frac{\kappa /\psi_t+z+\varepsilon}{1/\psi_t+z}<0.$$
Furthermore, 
from the definition of $\kappa$ it is seen that 
$$\kappa (y+M)-(\kappa y+M+\varepsilon) = M(\kappa-1)-\varepsilon<0,$$
whence $$\frac{d}{dy}\frac{\kappa y+M+\varepsilon}{ y+M}<0.$$
 Thus, from the previous inequality we deduce that
\begin{align*}
  \prod_{k=t+1-r}^{t}\frac{1+\varphi_kf_k}{1+\psi_kg_k} &> \frac{\kappa/m+M+\varepsilon}{1/m+M}\geq (1+\eta)^{3r}.
\end{align*}

On the other hand, if   $\kappa\varphi_t\geq  \psi_t$, 
then using the fact that
$$(g_t+\varepsilon/2)\varphi_t \geq g_t\left(1+\frac{\varepsilon}{2M}\right)\varphi_t=g_t\kappa\varphi_t$$
combined with the inequality $\varphi_t\geq m$, 
it is verified that
\begin{align*}
  \frac{1+\varphi_tf_t}{1+\psi_tg_t}&> \frac{1+\varphi_t(g_t+\varepsilon)}{1+\psi_tg_t}\\
  &=\frac{1+\varphi_t(g_t+\varepsilon/2)}{1+\psi_tg_t}+\frac{\varphi_t\varepsilon/2}{1+\psi_tg_t}\\
  &\geq 1+\frac{m \varepsilon/2}{1+M}\\
  &\geq (1+\eta)^3. 
\end{align*}

\textit{Step 2.} Fix $t_2$ and $t_1$ as in the statement of the lemma. We claim that there exists a finite decreasing sequence $(h_n)_{0\leq n\leq N}\subset\R$ with the properties 
\begin{itemize} 
  \item  $h_0=t_2$;
  \item  $\kappa\varphi_{h_n}\leq  \psi_{h_n}$ for all $1\leq n\leq N-1$;
  \item  $\kappa\varphi_t\geq  \psi_t$ for all $t\in I=\bigcup_{n=0}^{N-1}[h_{n+1},h_n-\tau].$
\end{itemize} 
Indeed, fix   $h_0=t_2$ and while $t_1\leq h_{n } $ define
\begin{equation*}
  h_{n+1} =\left\{\begin{aligned}
    &h_{n }-r& \text{if } \kappa \varphi_{h_{n }-r}\leq  \psi_{h_{n }-r},\\
   &\inf\left\{\begin{array}{l}
   h\geq   t_1 -r: \kappa \varphi_t\geq \psi_t\\
   \text{for all }t\in[h,h_n-r]
   \end{array}\right\} &\text{otherwise.}
  \end{aligned}\right.
\end{equation*}
Observe that the sequence ends when   $t_1> h_{N } \geq t_1-r$ and  the case  $N=1$  is possible. From the previous step, it is obtained that  
\begin{align*}
  \prod_{k=h_n+1-r}^{h_n}\frac{1+\varphi_kf_k}{1+\psi_kg_k} &>   (1+\eta)^{3r} ,\\
  \frac{1+\varphi_tf_t}{1+\psi_tg_t}& > (1+\eta)^3  
\end{align*}
for all  $1\leq n\leq N-1$ and $t\in I$, which implies that 
$$\prod_{k=h_N+1}^{h_0-r}\frac{1+\varphi_kf_k}{1+\psi_kg_k}  >   (1+\eta)^{3(h_0-h_N-r)}.$$

\emph{Step 3.} To conclude, observe that
\begin{align*}
  \prod_{k=t_1+1-r}^{t_2-r}\frac{1+\varphi_kf_k}{1+\psi_kg_k}&\geq \left(\prod_{k=t_1+1-r}^{h_N}\frac{1}{1+M}\right) \prod_{k=h_N+1 }^{h_0-r}\frac{1+\varphi_kf_k}{1+\psi_kg_k} \\
  &> \frac{ (1+\eta)^{3(t_2-t_1+1-r)}}{\left(1+M \right)^{r-1}} \\ 
  &=  (1+\eta)^{2(t_2-t_1)}   \frac{ (1+\eta)^{t_2-t_1-3(r-1)}}{\left(1+M \right)^{r-1}}\\
  &\geq (1+\eta)^{2(t_2-t_1)} 
\end{align*}
since $t_2-t_1\geq T$, and the result is proved.
\end{proof}

The above lemmata  shall be employed  to prove that the first statement 
 in Theorem \ref{teoremapersistencia} implies the fourth one, 
 while the following one can be seen as a comparison lemma and 
 will be used to prove that statement \ref{d} implies statement \ref{c}.  

\begin{lem}\label{nuevolemavuelta}
Assume that $c_t$ and $x_t$ are positive for all $t\geq t_0-r$ and satisfy 
$$\left\{\begin{aligned}
c_{t+1}&=(1-E)(c_t+f_{t-r}c_{t-r}),\\
x_{t+1}&=(1-E)(x_t+g_{t-r}x_{t-r})
\end{aligned}\right.$$
for all $t\geq t_0$. Furthermore, assume there exists  $\varepsilon\in(0,\inf f)$ such that $|f_t-g_t|<\varepsilon$ if $t\geq t_0-r$ where $\inf f$ denotes the infimum of $f_t$. Therefore we obtain
\begin{equation}\label{lemavuelta}
  \frac{c_t}{x_t}\leq \left(\frac{\inf f}{\inf f-\varepsilon}\right)^{(t-t_0)/(r+1)} \max_{t_0\leq s\leq t_0+r}\left\{\frac{c_{s}}{x_{s}}\right\}.
\end{equation} 
\end{lem}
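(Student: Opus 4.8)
The plan is to strip off the geometric factor $(\inf f/(\inf f-\varepsilon))^{1/(r+1)}$ by rescaling $c_t$, and then reduce \eqref{lemavuelta} to a one-sided comparison against the exact recursion for $x_t$. First I would set $q:=\dfrac{\inf f}{\inf f-\varepsilon}>1$ (which makes sense because $0<\varepsilon<\inf f$), put $Q:=q^{1/(r+1)}$, and introduce $a_t:=c_tQ^{-(t-t_0)}$ for $t\ge t_0-r$, so that $a_t>0$. Substituting $c_t=a_tQ^{t-t_0}$ into the recursion for $c$ and using $Q^{-(r+1)}=q^{-1}$, one gets, for $t\ge t_0$,
$$a_{t+1}=(1-E)\left(Q^{-1}a_t+q^{-1}f_{t-r}a_{t-r}\right).$$

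Next I would dominate this recursion term by term by that of $x$. On the one hand $Q^{-1}<1$. On the other hand, for $t\ge t_0$ the hypothesis $|f_{t-r}-g_{t-r}|<\varepsilon$ gives $g_{t-r}>f_{t-r}-\varepsilon$, and since $f_{t-r}\ge\inf f$ one has $g_{t-r}>f_{t-r}-\varepsilon\ge f_{t-r}-\dfrac{\varepsilon}{\inf f}f_{t-r}=\dfrac{f_{t-r}}{q}$; in particular $g_{t-r}>0$ and $q^{-1}f_{t-r}\le g_{t-r}$. Because $a_t,a_{t-r}>0$, these two inequalities yield
$$a_{t+1}\le(1-E)\left(a_t+g_{t-r}a_{t-r}\right),\qquad t\ge t_0,$$
whereas $x$ satisfies the corresponding identity with equality.

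Finally, with $\rho^*:=\max_{t_0\le s\le t_0+r}\{c_s/x_s\}$, I would prove $a_t\le\rho^*x_t$ for all $t\ge t_0$ by strong induction. For $t_0\le t\le t_0+r$ it is immediate, since $Q^{-(t-t_0)}\le 1$ gives $a_t/x_t=Q^{-(t-t_0)}(c_t/x_t)\le\rho^*$; for the step with $t\ge t_0+r$, applying the inductive hypothesis at $t$ and at $t-r$ together with the displayed domination gives $a_{t+1}\le(1-E)(\rho^*x_t+g_{t-r}\rho^*x_{t-r})=\rho^*x_{t+1}$. Undoing the rescaling, $c_t/x_t=Q^{t-t_0}(a_t/x_t)\le Q^{t-t_0}\rho^*$, which is exactly \eqref{lemavuelta}.

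The only real subtlety I anticipate is identifying the correct rescaling exponent: one needs $Q^{r+1}=q$ precisely so that the ``$r$-steps-back'' coefficient $f_{t-r}$ gets divided by $q$ while the ``one-step-back'' coefficient merely picks up the harmless factor $Q^{-1}<1$, and the ensuing check $f_{t-r}/q\le g_{t-r}$ is what forces the constant to be exactly $q=\inf f/(\inf f-\varepsilon)$. Once that choice is made, the rest is routine bookkeeping, and no change is needed in the limiting case $r=0$.
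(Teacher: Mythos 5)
Your proof is correct, and it genuinely differs from the paper's argument. The paper works directly with the ratio $c_t/x_t$ and runs an induction with a two-case analysis: depending on whether $f_{t-r}c_{t-r}/c_t \geq g_{t-r}x_{t-r}/x_t$ or not, it bounds $\frac{c_{t+1}}{x_{t+1}}=\frac{c_t+f_{t-r}c_{t-r}}{x_t+g_{t-r}x_{t-r}}$ either by $\frac{f_{t-r}}{g_{t-r}}\frac{c_{t-r}}{x_{t-r}}\leq\frac{\inf f}{\inf f-\varepsilon}\frac{c_{t-r}}{x_{t-r}}$ (a mediant-type inequality, which costs one factor of $q$ but lets the induction reach back $r+1$ steps, whence the exponent $(t-t_0)/(r+1)$) or simply by $\frac{c_t}{x_t}$, and then invokes the inductive hypothesis at $t-r$ or at $t$, respectively. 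You instead absorb the allowed geometric growth into $c$ from the start via $a_t=c_tQ^{-(t-t_0)}$ with $Q^{r+1}=q=\frac{\inf f}{\inf f-\varepsilon}$, check that the rescaled recursion has coefficients dominated by those of $x$ (using $Q^{-1}<1$ and $q^{-1}f_{t-r}\leq g_{t-r}$, the latter being exactly where $|f-g|<\varepsilon$ and $f\geq\inf f$ enter, and which also gives the positivity of $g$ needed for monotonicity), and conclude by a one-line monotone comparison induction $a_t\leq\rho^*x_t$. Both proofs use the same ingredients ($g>f-\varepsilon$, $u\mapsto u/(u-\varepsilon)$ decreasing, and the $(r+1)$-step bookkeeping), but your rescaling removes the case distinction and the ratio manipulations, making the comparison structure more transparent, while the paper's version avoids introducing an auxiliary sequence and mirrors the case analysis it reuses in Lemmas \ref{lemaida} and \ref{lemaextincion}.
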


\begin{proof}
We shall proceed by induction. 
Notice that the result holds for all  $t_0\leq t\leq t_0+r$, and assume 
it is true for $t$ and $t-r$. There are two cases to study. On the one hand, if 
$$f_{t-r}\frac{c_{t-r}}{c_t}\geq g_{t-r}\frac{x_{t-r}}{x_t},$$
then
\begin{align*}
  \frac{c_{t+1}}{x_{t+1}}&=\frac{c_t+f_{t-r}c_{t-r}}{x_{t}+g_{t-r}x_{t-r}}\\
  &=\frac{c_t }{x_t }\frac{1+f_{t-r}c_{t-r}/c_{t }}{1+g_{t-r }x_{t-r }/x_{t }}\\
  &\leq \frac{c_t }{x_t } \frac{ f_{t-r}c_{t-r}/c_{t }}{ g_{t-r }x_{t-r }/x_{t }}\\
  &=\frac{f_{t-r}}{g_{t-r}}\frac{c_{t-r}}{x_{t-r}}\\
  &\leq \frac{f_{t-r}}{f_{t-r}-\varepsilon}\frac{c_{t-r}}{x_{t-r}}.
\end{align*}
Because
$$\frac{f_{t-r}}{f_{t-r}-\varepsilon}\leq \frac{\inf f}{\inf f-\varepsilon}$$
and using induction, it is verified that 
\begin{align*}
  \frac{c_{t+1}}{x_{t+1}}&\leq \frac{\inf f}{\inf f-\varepsilon} \left(\frac{\inf f}{\inf f-\varepsilon}\right)^{(t-r-t_0)/(r+1)} \max_{t_0\leq s\leq t_0+r}\left\{\frac{c_{s}}{x_{s}}\right\}\\
  &=\left(\frac{\inf f}{\inf f-\varepsilon}\right)^{(t+1-t_0)/(r+1)} \max_{t_0\leq s\leq t_0+r}\left\{\frac{c_{s}}{x_{s}}\right\}.
\end{align*}
On the other hand, if 
$$f_{t-r}\frac{c_{t-r}}{c_t}< g_{t-r}\frac{x_{t-r}}{x_t},$$
then it is seen that
\begin{align*}
  \frac{c_{t+1}}{x_{t+1}}&=\frac{c_t+f_{t-r}c_{t-r}}{x_{t}+g_{t-r}x_{t-r}}\\
  &=\frac{f_{t-r}c_{t-r} }{g_{t-r}x_{t-r} }\,\frac{c_t/(f_{t-r}c_{t-r})+1}{x_t/(g_{t-r }x_{t-r })+1}\\
  &\leq  \frac{c_{t}}{x_{t}}\\ 
  &<\left(\frac{\inf f}{\inf f-\varepsilon}\right)^{(t+1-t_0)/(r+1)} \max_{t_0\leq s\leq t_0+r}\left\{\frac{c_{s}}{x_{s}}\right\}.
\end{align*}
and so concludes the proof. 
\end{proof}


\begin{lem}\label{lema-atractividad}
The identity 
$$x_t=(1-E)^r(x_{t-r}+y_{t-r}) $$
holds for all $t\geq r$.
\end{lem}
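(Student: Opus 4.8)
The plan is to establish the identity directly, by iterating the recursion for $x$ in \eqref{modeloprincipal} exactly $r$ times and then recognizing the resulting finite sum as $(1-E)^{r}y_{t-r}$, with $y$ as in \eqref{definicionY}.

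First I would fix $t\ge r$. Since $t\ge r$, each of the indices $t-1,t-2,\dots,t-r$ is nonnegative, so the second equation of \eqref{modeloprincipal} may be invoked at all of them, giving, for $0\le i\le r-1$,
$$x_{t-i}=(1-E)x_{t-i-1}+x_{t-i-1-r}\,p(s_{t-i-1-r})(1-E)^{r+1}.$$
Multiplying the $i$-th identity by $(1-E)^{i}$ and summing over $i=0,\dots,r-1$, the contributions $(1-E)^{i}x_{t-i}$ and $(1-E)^{i+1}x_{t-i-1}$ telescope, leaving
$$x_{t}=(1-E)^{r}x_{t-r}+(1-E)^{r+1}\sum_{i=0}^{r-1}(1-E)^{i}x_{t-1-i-r}\,p(s_{t-1-i-r}).$$
Then I would rewrite $y_{t-r}$ by substituting $t-r-1$ for $t$ in \eqref{definicionY}, which yields $y_{t-r}=\sum_{i=0}^{r-1}x_{t-r-1-i}\,p(s_{t-r-1-i})(1-E)^{i+1}$; hence $(1-E)^{r}y_{t-r}$ equals precisely the sum in the previous display. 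Substituting gives $x_{t}=(1-E)^{r}(x_{t-r}+y_{t-r})$.

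An alternative route is induction on $t$: using the two recursions one checks the one-step relations $x_{t+1}=(1-E)^{r+1}\bigl(x_{t-r}+x_{t-r}p(s_{t-r})+y_{t-r}\bigr)$ (via the inductive hypothesis) and $x_{t+1-r}+y_{t+1-r}=(1-E)\bigl(x_{t-r}+x_{t-r}p(s_{t-r})+y_{t-r}\bigr)$ (unconditionally, by peeling the first summand off $y_{t+1-r}$ and re-absorbing the rest into $y_{t-r}$ with the recursion for $x_{t+1-r}$), from which the inductive step is immediate. However, this still requires the base case $x_{r}=(1-E)^{r}(x_{0}+y_{0})$, which is exactly the telescoping computation above (now with $x_{t-r}=x_{0}$ and the $x_{t-1-i-r}$ drawn from the initial data), so I would present the direct argument.

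There is essentially no difficulty here; the only thing requiring care is the bookkeeping of indices — making sure every instance of the recursion used is legitimate, which is precisely guaranteed by $t\ge r$ together with the fact that \eqref{modeloprincipal} holds for nonnegative times with data prescribed on $\{-r,\dots,0\}$, and that the index shift in \eqref{definicionY} producing $y_{t-r}$ is carried out correctly. The degenerate case $r=0$ is trivial, since then $y\equiv 0$ and the asserted identity reads $x_{t}=x_{t}$.
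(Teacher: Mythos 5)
Your proof is correct and follows essentially the same route as the paper: the paper unwinds the recursion for $x$ by induction to get $x_t=(1-E)^{n+1}x_{t-1-n}+\sum_{k=0}^{n}(1-E)^{r+1+k}x_{t-1-r-k}p(s_{t-1-r-k})$ and then sets $n=r-1$, while your telescoping sum produces exactly that $n=r-1$ instance before identifying the sum with $(1-E)^{r}y_{t-r}$. The index bookkeeping and the treatment of the case $r=0$ are handled correctly.
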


\begin{proof}
We claim that 
$$x_t=(1-E)^{n+1}x_{t-1-n}+\sum_{k=0}^n (1-E)^{r+1+k}x_{t-1-r-k}p(s_{t-1-r-k}) $$
for all  $n\in[ 0,t-1]$. 
By induction, 
\begin{align*}
  x_t&=(1-E)^{n+1}x_{t-1-n}+\sum_{k=0}^n x_{t-1-r-k}p(s_{t-1-r-k})(1-E)^{r+1+k}\\
  &=(1-E)^{n+1}((1-E)x_{t-2-n}+x_{t-2-n-r}p(s_{t-2-n-r})(1-E)^{r+1})\\
  &\quad +\sum_{k=0}^nx_{t-1-r-k}p(s_{t-1-r-k}) (1-E)^{r+1+k}\\
  &=(1-E)^{n+2}x_{t-1-(n+1)}+\sum_{k=0}^{n+1} x_{t-1-r-k}p(s_{t-1-r-k})(1-E)^{r+1+k} .
\end{align*}
Moreover,  the case $n=0$ is direct and, when 
$n=r-1$, it is verified that  
\begin{align*}
  x_t&=(1-E)^{r}x_{t-r}+\sum_{k=0}^{r-1} (1-E)^{r+1+k}x_{t-1-r-k}p(s_{t-1-r-k})\\
  &=(1-E)^{r}\left(x_{t-r}+y_{t-r}\right).
\end{align*}
\end{proof}

\begin{lem}\label{lemaextincion}
Let $t_0\geq 0$ and consider 
\begin{align*}
  \varphi_{t+1}&=\prod_{k=t+1-r}^t (1+\varphi_kf_k)^{-1},\\
  \psi_{t+1}&=\prod_{k=t+1-r}^t (1+\psi_kg_k)^{-1}
\end{align*}
for all $t\geq t_0$, where $f_t\geq g_t$ for all $t\ge t_0$. Then
$$(1+M)^{r-1}\prod^{t_2-r}_{k=t_1+1-r}(1+\varphi_kf_k)\geq\prod^{t_2-r}_{k=t_1+1-r}(1+\psi_kg_k)$$
for all $t_2> t_1\geq t_0.$ 
\end{lem}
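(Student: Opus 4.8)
The plan is to reduce the statement to the equivalent multiplicative form
$$
\prod_{k=t_1+1-r}^{t_2-r}\frac{1+\varphi_k f_k}{1+\psi_k g_k}\ \ge\ (1+M)^{-(r-1)},\qquad t_2>t_1\ge t_0,
$$
and to recognize this as the conclusion of Lemma \ref{lemaida} in the degenerate regime in which the strict gap $f_t>g_t+\varepsilon$ collapses to $f_t\ge g_t$; i.e. I would rerun the three–step argument of Lemma \ref{lemaida} with $\varepsilon=0$ (so, in its notation, $\kappa=1$ and $\eta=0$, while $m=(1+M)^{-r+1}$ is unchanged), keeping track of which inequalities degenerate from ``strictly $>(1+\eta)^{\,\cdot}$'' to simply ``$\ge1$'' and which one survives intact, namely the boundary estimate by $(1+M)^{-(r-1)}$. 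Here $M$ denotes the assumed upper bound for $f$, hence also for $g$, and one uses throughout that every factor $1+\varphi_k f_k$, $1+\psi_k g_k$ is $\ge1$, so that $\varphi_t,\psi_t\le1$ and $\psi_k g_k\le M$ (we also assume $r\ge1$, the inequality being trivial otherwise).

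The building blocks are the following. Inverting the two defining recursions gives the one–window identity $\prod_{k=t+1-r}^{t}(1+\varphi_k f_k)=\varphi_{t+1}^{-1}$, and likewise for $\psi$, whence $\prod_{k=t+1-r}^{t}\tfrac{1+\varphi_k f_k}{1+\psi_k g_k}=\psi_{t+1}/\varphi_{t+1}$; thus a full $r$–window contributes a factor $\ge1$ exactly in the favorable situation (the analogue of Case A in Step~1 of Lemma \ref{lemaida}). At any single index $k$ with $\varphi_k\ge\psi_k$ one has $\varphi_k f_k\ge\psi_k g_k$, because $f_k\ge g_k\ge0$, so the single factor $\tfrac{1+\varphi_k f_k}{1+\psi_k g_k}\ge1$ (the analogue of Case B). Finally, at an arbitrary index $k$ the crude bound $\tfrac{1+\varphi_k f_k}{1+\psi_k g_k}\ge\tfrac1{1+\psi_k g_k}\ge\tfrac1{1+M}$ is always available. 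With $\varepsilon=0$ there is no multiplicative surplus left: what were bounds by $(1+\eta)^{3r}$ and $1+\eta$ in Lemma \ref{lemaida} become just ``$\ge1$''.

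With these at hand, I would mimic Step~2 of Lemma \ref{lemaida}: construct the finite decreasing chain $h_0=t_2>h_1>\dots>h_N$, stopping as soon as $t_1>h_N\ge t_1-r$, taking $h_{n+1}=h_n-r$ whenever the favorable comparison holds at the appropriate index and otherwise letting $h_{n+1}$ be the left endpoint of the maximal run of unfavorable indices below $h_n-r$. This partitions $[h_N+1,\,t_2-r]$ into full $r$–windows and isolated unfavorable indices, each contributing a factor $\ge1$ by the previous paragraph, so $\prod_{k=h_N+1}^{t_2-r}\tfrac{1+\varphi_k f_k}{1+\psi_k g_k}\ge1$; the leftover boundary block $\prod_{k=t_1+1-r}^{h_N}\tfrac{1+\varphi_k f_k}{1+\psi_k g_k}$ has $h_N-t_1+r\le r-1$ factors, each $\ge(1+M)^{-1}$, hence is $\ge(1+M)^{-(r-1)}$, and multiplying the two pieces and clearing denominators yields the lemma. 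The delicate part is the combinatorics of the chain $(h_n)$ and the consistent tracking of the index shifts, so that each favorable $r$–window really equals $\psi_\bullet/\varphi_\bullet$ with $\psi_\bullet\ge\varphi_\bullet$ and the boundary block genuinely has at most $r-1$ terms; this is routine and is already carried out in the proof of Lemma \ref{lemaida}. As a side remark, a shorter but lossier route is to pass to the associated linear sequences $c_{t+1}=c_t+f_{t-r}c_{t-r}$, $x_{t+1}=x_t+g_{t-r}x_{t-r}$ with $\varphi_k=c_k/c_{k+r}$, $\psi_k=x_k/x_{k+r}$: the product telescopes to $\rho_{t_2+1}/\rho_{t_1+1}$ with $\rho=c/x$, and since $\rho_{t+1}\ge\min(\rho_t,\rho_{t-r})$ while $\rho$ can drop, and $c$ can grow, by at most a factor $1+M$ per step, $\rho$ cannot decrease by more than a fixed power of $1+M$ over any interval — giving a bound of the same shape, albeit with a non‑sharp constant such as $1+rM$.
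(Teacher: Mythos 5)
Your proposal is correct and follows essentially the same route as the paper: the paper's proof explicitly reruns the machinery of Lemma \ref{lemaida} with $\kappa=1$ (i.e.\ the $\varepsilon=0$ degenerate case), using the one-window identity to get a factor $\geq 1$ on full $r$-windows, the single-factor comparison when $\varphi_k\geq\psi_k$, the chain $(h_n)$ of Step 2, and the boundary block of at most $r-1$ factors each bounded below by $(1+M)^{-1}$. No substantive differences to report.
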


\begin{proof}
We shall split the proof in two steps.

\textit{Step 1.} Firstly, if $\varphi_{t+1}\leq \psi_{t+1}$ for all $t\geq t_0$, then
$$1\leq \frac{\psi_{t+1}}{\varphi_{t+1}}=\prod^{t }_{t +1-r}\frac{1+\varphi_kf_k}{1+\psi_kg_k}.$$
furthermore, if  $\varphi_{k}\geq \psi_{k}$ for all $k\in [h_1,h_2]$, then
$$\prod_{k=h_1 }^{h_2}\frac{1+\varphi_kf_k}{1+\psi_kg_k}\geq 1.$$

\textit{Step 2.} We shall repeat 
the idea used in Lemma \ref{lemaida}, now  setting $\kappa=1$, and  $t_2> t_1\geq t_0$. 
We claim that there is a finite and decreasing sequence $(h_n)_{0\leq n\leq N}\subset\R$ satisfying 
\begin{itemize} 
  \item $h_0=t_2$;
  \item  $ \varphi_{h_n}\leq  \psi_{h_n}$ for all $1\leq n\leq N-1$;
  \item  $ \varphi_t\geq  \psi_t$ if
  $$t\in I=\bigcup_{n=0}^{N-1}[h_{n+1},h_n-\tau].$$
\end{itemize} 
Next, set $h_0=t_2$ and, while $t_1\leq h_{n } $, set
\begin{equation*}
  h_{n+1} =\left\{\begin{aligned}
    &h_{n }-\tau& \text{if }   \varphi_{h_{n }-r}\leq  \psi_{h_{n }-r},\\
   &\inf\left\{\begin{array}{l}
   j \geq   t_1 -r:   \varphi_t\geq \psi_t\\
   \text{for all }t\in[j,h_n-r]
   \end{array}\right\} &\text{otherwise.}
  \end{aligned}\right.
\end{equation*}
The sequence ends when $t_1> h_{N } \geq t_1-r$ and  
$N=0$ is a possible outcome. From the previous step it is deduced that 
\begin{align*}
  \prod_{k=h_n+1-r}^{h_n}\frac{1+\varphi_kf_k}{1+\psi_kg_k} &\geq 1,\\
  \frac{1+\varphi_tf_t}{1+\psi_tg_t}&\geq 1 
\end{align*}
for all $1\leq n\leq N-1$ and $t\in I$ which, in turn, implies
\begin{align*}
  \prod_{k=t_1+1-r}^{t_2-r}\frac{1+\varphi_kf_k}{1+\psi_kg_k}&\geq \left(\prod_{k=t_1+1-r}^{h_N}\frac{1}{1+M}\right) \prod_{k=h_N+1 }^{h_0-r}\frac{1+\varphi_kf_k}{1+\psi_kg_k}\geq \frac{1}{(1+M)^{r-1}}
\end{align*} 
and the  proof is thus complete.
\end{proof}

\section{Proofs of the main results}
\label{proofs}
\subsection{The general case}

In order to give a complete proof of Theorem \ref{teoremapersistencia}, let us 
firstly observe 
that the following chain of implications holds trivially: 

$$
 \ref{c} \Longrightarrow 
\ref{b} 
\Longrightarrow \ref{a} \Longrightarrow  \ref{z}.
$$
Thus, it suffices to verify that \ref{z} implies \ref{d} and that, in turn, the latter statement implies \ref{c}.
To this end, let us  fix three constants $\underline S$, $\overline S$ and $M$ such that  $0<\underline S\leq s_t^{0}\leq \overline S$ for all $t$ and $M:=\max\{p(2\overline S)\}$, which constitutes an upper bound for the functions $f_t:=p(z_t)$ and $g_t:=p(s_t)$ for  $t\geq t_g$.  

\medskip 

\begin{proofpers}
In the first place, we shall demonstrate that   statement \ref{z} implies statement \ref{d}. The proof shall be based on  \cite[Lemma 3]{ellermeyer2001persistence}. 
{To this end, fix $(s_t,x_t)$  such that there is $\delta>0$ that satisfies $x_t\geq \delta$ for all $t\geq 0$.}
By Lemma \ref{lemaclave}, there exists $t_0$ such that $t\geq t_0$ implies 
$$z_t>s_t+x_t+y_t-\delta/2\geq s_t+\delta/2.$$
Recall that $s^0$ is bounded from above, therefore $z_t$ and $s_t$ are also bounded from above and there is a constant  $K$ such that  
$$\frac{p(z_t)-p(s_t)}{z_t-s_t}=p'(\xi)\geq K$$ 
and
$$p(z_t)>p(s_t)+K\delta/2.$$ 
By Lemma \ref{lemaida}, there are positive constants $\eta$  and $T$ such that  
$$ \prod_{k=t_1+1}^{t_2}\frac{1+\varphi_{k-r}p(z_{k-r})}{1+\psi_{k-r}p(s_{k-r})}>(1+\eta)^{2(t_2-t_1)}$$
for all $t_1>T$, $t_2-t_1>T$.

Furthermore, since $x_t$ is upperly bounded (this is a consequence of Lemma   \ref{lemaclave} and the fact that  $z_t$ is upper bounded) we can find $T$ large enough such that  
$$\frac{\delta}{x_{t }}>(1+\eta)^{-T} $$
for all $t\geq 0$. Finally, from the expression \eqref{expresionxt} it is seen that
\begin{align*}
  \prod_{k=t_1+1}^{t_2}(1+\varphi_{k-r}p(z_{k-r}))&>(1+\eta)^{2(t_2-t_1 )}\prod_{k=t_1+1}^{t_2}(1+\psi_{k-r}p(s_{k-r}))  \\
  &=(1+\eta)^{2(t_2-t_1 )}\frac{x_{t_2+1}}{x_{t_1+1}(1-E)^{t_2-t_1}}\\
  &\geq \left(\frac{1+\eta}{1-E}\right)^{ t_2-t_1  } \frac{\delta}{x_{t_1+1}}(1+\eta)^T\\
  &>\left(\frac{1+\eta}{1-E}\right)^{ t_2-t_1  },
\end{align*}
so the statement \ref{d} is proven.

Next, we shall show that statement \ref{d} implies statement \ref{c}. 
With this in mind, fix  positive constants $\eta$ and $T$  such that  
\begin{equation*}
(1-E)^{t_2-t_1}\prod_{k=t_1+1}^{t_2}(1+p(z_{k-r})\varphi_{k-r} )=\frac{c_{t_2+r+1}}{c_{t_1+r+1}}>(1+\eta)^{t_2-t_1} 
\end{equation*}
for all $t_1>T$ and $t_2-t_1>T$.  
The proof shall be  split  into two steps.

\textit{Step 1.}
Let us firstly prove the existence of a positive constant $C$ such that for  any particular trajectory $(s_t,x_t)$ and some sufficiently large value of $t$ it is verified that $x_t>C$. 
 With this in mind, fix $\varepsilon\in(0,p(\underline S))$ such that 
\begin{equation*}\label{condicionepsilon}
  \frac{p(\underline S)}{p(\underline S)-\varepsilon}=(1+\eta)^{(r+1)/2}
\end{equation*}
and define $L=\max_{s\in [0,2\overline S]}p'(s).$ Furthermore, let $R$ and $\alpha$ be any positive constants and fix $t_0\geq T+r+1$ such that
\begin{equation}\label{cotaparatodo}
  (\overline S+2R+Rp(R)r)(1-E)^{t_0-r}\leq \min\left\{\frac{\varepsilon}{2L},\overline S\right\} 
\end{equation}
and an initial time value $\tilde T:= \tilde T(R,\alpha) >t_0+T+r$ such that 
\begin{align*}
  \left(1+\eta\right)^{(\tilde T-t_0)/2-r}&\geq\frac{ \varepsilon}{2L(1+Mr)\alpha (1-E)^{t_0+r}}.
\end{align*}

We are now in position of working with a particular solution of system \eqref{modeloprincipal}. Therefore, consider   $(s_t,x_t)$ such that $\|(s^{in},x^{in})\|\leq R$ and $x_0\geq \alpha$. We claim that there exists $t^*\in [t_0-r,\tilde T]$ such that 
$$x_{t^*}\geq \frac{\varepsilon}{2L(1+Mr)}.$$
By contradiction, suppose that  
\begin{equation}\label{absurdo}
  x_t<\frac{\varepsilon}{2L(1+Mr)}
\end{equation} 
for all $t\in[t_0-r,\tilde T]$ and observe that 
\begin{equation}\label{cotaparaY}
  y_t=\sum_{k=1}^{r-1}x_{t-k}p(s_{t-k})(1-E)^k\leq \max_{k=1,\dots,r}\{x_{t-k}\}p\left(\max_{k=1,\dots,r}\{s_{t-k}\}\right)r  .
\end{equation}
In particular $y_0\leq Rp(R)r$, so we obtain
\begin{align*}
  |z_t-s_t-x_t-y_t|&=|z_0-s_0-x_0-y_0|(1-E)^t\\
  &\leq (\overline S+2R+Rp(R)r)(1-E)^{t}.
\end{align*}
Combined with \eqref{cotaparatodo}, this implies 
\begin{equation}\label{cota2s}
  s_t\leq |z_t-s_t-x_t-y_t|+z_t\leq 2\overline S 
\end{equation}
if $t\geq t_0-r$. Using again \eqref{cotaparatodo} and  \eqref{absurdo}, it follows  that 
\begin{align*}
  |z_t-s_t|&\leq |z_t-s_t-x_t-y_t|+x_t+y_t\\
  &<\frac{\varepsilon}{2L}+\frac{\varepsilon}{2L(1+Mr)}+\max_{k=1,\dots,r}\{x_{t-k}\}p\left(\max_{k=1,\dots,r}\{s_{t-k}\}\right)r\\
  &\leq \frac{\varepsilon}{2L}+\frac{\varepsilon}{2L(1+Mr)}+\frac{\varepsilon}{2L(1+Mr)}p(2\overline S)r\\
  &=\varepsilon/L
\end{align*} 
if $t\geq t_0 $, which implies that 
\begin{align*}
  |p(z_t)-p(s_t)|&\leq L |z_t-s_t|<\varepsilon.  
\end{align*}

Finally, fix $s^*\in [t_0,t_0+r]$ such that 
$$\max_{t_0\leq s\leq t_0+r}\left\{\frac{c_{s}}{x_{s}}\right\}= \left\{\frac{c_{s^*}}{x_{s^*}}\right\}$$
and observe that    $z_t\geq \underline S$ implies that $p(z_t)\geq f(\underline S)$. 
Thus,  Lemma   \ref{nuevolemavuelta} yields a contradiction, since 
\begin{align*}
  x_{\tilde T}&=x_0\frac{x_{s^*}}{x_0} \frac{x_{\tilde T}}{x_{s^*}}\\
  &\geq \alpha \left(\prod_{k=0}^{s^*-1}(1-E)\right) \frac{x_{\tilde T}}{x_{s^*}} \\
  &\geq \alpha (1-E)^{t_0+r} \left(\frac{f(\underline S)-\varepsilon}{f(\underline S)}\right)^{(\tilde T-t_0)/(r+1)}\frac{c_{\tilde T}}{c_{s^*}} \\
  &\geq \alpha (1-E)^{t_0+r} \left(1+\eta\right)^{-(\tilde T-t_0)/2}(1+\eta)^{\tilde T-s^*} \\
  &\geq \alpha (1-E)^{t_0+r} \left(1+\eta\right)^{-(\tilde T-t_0)/2}(1+\eta)^{\tilde T-t_0-r} \\ 
  &\geq \frac{\varepsilon}{2L(1+Mr)}.
\end{align*} 

\textit{Step 2.} In order to complete the proof, we need to find a positive lower bound 
for arbitrary initial values and sufficiently large times. 
Let 
$$\hat T:=\tilde T\left(\sqrt{8} \overline S ,\frac{\varepsilon}{2L(1+Mr)}\right),$$
and define 
$$\delta:=\frac{\varepsilon}{2L(1+Mr)(1-E)^{\hat T}},$$
which is independent of the initial conditions. 

From the previous step, we have that 
\begin{itemize}
  \item $s_t\leq 2\overline S$ if $t\geq t_0-r$,
  \item $x_t\leq 2\overline S$ if $t\geq t_0-r$, which is verified exactly as  \eqref{cota2s},
  \item there exists $t^*_0\geq t_0-r$ such that $x_{t^*_0}\geq \varepsilon/(2L(1+Mr))$.
\end{itemize}
Then, the particular solution $(s_{t-t^*_0},x_{t-t^*_0})$ satisfies   
$$|(s_{t-t^*_0},x_{t-t^*_0})|\leq \sqrt{8}\overline S$$
for all $t\in [t^*_0-r,t^*_0]$ and  $x_{t^*_0}\geq \varepsilon/(2L(1+Mr))$. Furthermore, there exists $t_1^*\in (t^*_0,\tilde T]$ such that   
$x_{t^*_1}\geq \varepsilon/(2L(1+Mr)).$ 
Inductively, we obtain $(t^*_n)_{n\in\N_0}$ such that $t^*_{n+1}-t^*_{n}\leq \hat T$ and  $x_{t^*_n}\geq  \varepsilon/(2L(1+Mr)).$ 
Finally, for $t\geq \tilde T\geq t^*_0-r$ it is deduced that
$$x_t\geq (1-E)^{-\hat T}\frac{\varepsilon}{2L(1+Mr)}=\delta,$$
as desired.
\end{proofpers}

{\begin{rem} It is noticed, in the previous proof, that a value   $\tilde T$ was established in order to satisfy 
$$\alpha (1-E)^{t_0+\tilde T}(1+\eta)^{I^{in}/2}\geq\frac{\varepsilon}{2L(1+Mr)}.$$
It is worthy to compare this with the analogous requirement  in \cite{ellermeyer2001persistence} for the continuous case, namely 
$$\alpha e^{-E(t_0+\tilde T)} e^{\eta I^{in}/2}\geq\frac{\varepsilon}{2L(1+Mr)}.$$
 \end{rem}}

\begin{proofattr}
Let   $(s_r,x_t)$ and   $(\hat s_t,\hat x_t)$ be  solutions. We split the proof into four steps.

\textit{Step 1.} We shall construct a function  $w_t$ that  lies above the function
$s_t-\hat s_t$ and such that $w_t\to 0$ as $t\to +\infty$. To this end, fix a positive constant 
$$\gamma=\min_{s\in [0,2\overline S]}\{\delta p'(s),1\}$$
where $\delta>0$ is given by Theorem \ref{teoremapersistencia}. Since $s_t$ is upper bounded (because $s^0_r$ is upper bounded) we can define the function  
\begin{align*}
  J(\varepsilon)&=1- (1-E)^\varepsilon+(1-E)^{-\varepsilon r}\max_{t\geq 0}\{p(s_t)\}[1-(1-E)^{\varepsilon r+\varepsilon}] -\frac{ \gamma}{2}
\end{align*} 
that is continuous and satisfies $J(0)<0$. Therefore, we can fix $\varepsilon\in (0,1/2]$ such that $J(\varepsilon)\leq 0$. 

By Lemma \ref{lemaclave} and Lemma \ref{lema-atractividad} we have that
\begin{align*}
    \hat x_t-x_t&=(1-E)^r(\hat x_{t-r}+\hat y_{t-r}+\hat s_{t-r}-z_{t-r}+z_{t-r}-x_{t-r}-y_{t-r}-s_{t-r}\\
    &\quad +s_{t-r}-\hat s_{t-r})\\
    &=(1-E)^tC+(1-E)^r( s_{t-r}-\hat s_{t-r})
\end{align*}
where $C=(1-E)^{-r}(\hat x_0+\hat y_0+\hat s_0-x_0-y_0-s_0).$ Therefore we can fix $t_0$ such that
\begin{equation}\label{desigualdadt0}
    (1-E)^{t/2} C \max_{t\geq 0}\{p(s_t)\}<\delta \gamma/2
\end{equation}
if $t\geq t_0$. 

\emph{Step 2.} Now define  
$$w_t=K(1-E)^{\varepsilon t}x_{t+r} $$
with $K\geq 1$ large enough such that $s_{t }\leq w_{t }$ for all $t\leq t_0$. By Lemma \ref{lema-atractividad} we get
\begin{align*}
    w_t&=K(1-E)^{\varepsilon(t-r+r)}\frac{x_{t+r}}{x_t}x_t\\
    &=(1-E)^{\varepsilon r}\frac{ x_{t+r}}{x_t}w_{t-r}\\
    &=(1-E)^{\varepsilon r}\frac{(1-E)^r (x_{t}+y_t)}{x_t}w_{t-r}\\
    &\geq (1-E)^{\varepsilon r+r}w_{t-r}.
\end{align*}
And then
\begin{equation}\label{desigualdadwt}
    \begin{aligned}
   & w_t [ 1-(1-E)^\varepsilon]  +(1-E)^{r }w_{t-r} p(s_t) [ 1-(1-E)^{\varepsilon r+\varepsilon}] \\
    &\leq w_t\Big( 1-(1-E)^\varepsilon +(1-E)^{-\varepsilon r }  p(s_t) [ 1-(1-E)^{\varepsilon r+\varepsilon}] \Big)\\
    &\leq w_t (J(\varepsilon)+\gamma/2)\\
    &\leq w_t\gamma/2
    \end{aligned}
\end{equation}
Furthermore,
\begin{align*}
  w_{t+1}&=K(1-E)^{\varepsilon t+\varepsilon}x_{t+1+r} \\
  &=K(1-E)^{\varepsilon t+\varepsilon}[(1-E)x_{t+r}+x_tp(s_t)(1-E)^{r+1}]\\
  &=(1-E)^{\varepsilon+1}w_t+(1-E)^{ \varepsilon r+\varepsilon+r+1}w_{t-r}p(s_t).
\end{align*}

\textit{Step 3.} We claim that   $s_t-\hat s_t\leq w_t.$ For this we proceed by induction in the time variable. Note that for all $t\leq t_0$  
$$s_t-\hat s_t\leq s_t \leq w_t.$$
Now assume   $s_s-\hat s_s\leq w_s$ for all $s\geq t$, then
\begin{align*}
    s_{t+1}-\hat s_{t+1}&=(1-E)[s_t-\hat s_t-x_tp(s_t)+\hat x_tp(\hat s_t)]\\
    &=(1-E)[s_t-\hat s_t-x_tp(s_t)+\hat x_t p(s_t)-\hat x_t p(s_t)+\hat x_tp(\hat s_t)]\\
    &=(1-E)[s_t-\hat s_t+(\hat x_t-x_t)p(s_t)+\hat x_t( p(\hat s_t)- p(s_t))]\\
    &=(1-E)[s_t-\hat s_t+(1-E)^tCp(s_t)\\
    &\quad +(1-E)^r( s_{t-r}-\hat s_{t-r})p(s_t)+\hat x_t p'(\xi_t)(\hat s_t- s_t)]\\
    &=(1-E)(s_t-\hat s_t) [1-\hat x_tp'(\xi_t)]\\
    &\quad +(1-E)^{r+1}( s_{t-r}-\hat s_{t-r})p(s_t)+(1-E)^{t+1}Cp(s_t)
\end{align*}
where we used  Lagrange's mean value theorem with $\xi_t$ between $\hat s_{t-r}$ and $s_{t-r}$ and, in particular,
 $\xi_t\in [0,2\overline S]$. Therefore, by induction and Theorem \ref{teoremapersistencia}:
\begin{align*}
  s_{t+1}-\hat s_{t+1}&\leq (1-E)w_t [1-\gamma]+(1-E)^{r+1}w_{t-r} p(s_t) +(1-E)^{t+1}Cp(s_t)\\
  &=(1-E)w_t [(1-E)^\varepsilon+1-(1-E)^\varepsilon] \\
  &\quad +(1-E)^{r+1}w_{t-r} p(s_t) [(1-E)^{\varepsilon r+\varepsilon}+1-(1-E)^{\varepsilon r+\varepsilon}] \\
  &\quad +(1-E)^{t+1}Cp(s_t)-(1-E)w_t  \gamma\\
  &=w_{t+1}+(1-E)w_t [ 1-(1-E)^\varepsilon] \\
  &\quad +(1-E)^{r+1}w_{t-r} p(s_t) [ 1-(1-E)^{\varepsilon r+\varepsilon}] \\
  &\quad +(1-E)^{t+1}Cp(s_t)-(1-E)w_t  \gamma .
\end{align*}
Now we use the inequalities \eqref{desigualdadt0} and
\eqref{desigualdadwt} to deduce:
\begin{align*}
    s_{t+1}-\hat s_{t+1}&\leq w_{t+1}+(1-E)^{t+1}Cp(s_t)-(1-E)w_t  \gamma/2\\
    &\leq w_{t+1}+(1-E)[(1-E)^tC\max_{t\geq 0}\{p(s_t)\}-K(1-E)^{\varepsilon t}x_{t+r}\gamma/2]\\
    &\leq w_{t+1}+(1-E)[(1-E)^tC\max_{t\geq 0}\{p(s_t)\}- (1-E)^{ t/2}\delta\gamma/2]\\
    &= w_{t+1}+(1-E)^{1+t/2}[(1-E)^{t/2}C\max_{t\geq 0}\{p(s_t)\}-  \delta\gamma/2]\\
    &\leq w_{t+1},
\end{align*}
as desired.

\textit{Step 4.} Similarly, $\hat s_t-s_t$ is upper bounded and we get that $|s_t-\hat s_t|$ goes to zero when $t$ tends to infinity. By Lemma \ref{lemaclave} this implies that 
$$|x_t+y_t-(\hat x_t+\hat y_t)|$$
tends to zero and, again by Lemma \ref{lema-atractividad}, we obtain that 
$|x_t-\hat x_t|$
goes to zero as desired.
\end{proofattr}

\begin{proofext}
By hypothesis, there exist $\eta>0$ and $T>r$ such that   
$$ 
  (1-E)^{t_2-t_1}\prod_{k=t_1+1}^{t_2}(1 +\varphi_{k-r}p(z_{k-r}) )<\left( 1-\eta \right)^{ t_2-t_1  }
$$
for all $t_1>T$ and $t_2-t_1>T$. Now fix $(s,x)$ solution. If extinction occurs, then there is nothing to prove. Otherwise,  it happens that
$\displaystyle\limsup_{t\to\infty}x_t>0$ and, by Lemma \ref{lemaclave}, there exists $t_0$ such that 
$z_t\geq s_t$ for all $t\geq t_0$.  By  \eqref{expresionxt} and Lemma \ref{lemaextincion}, 
\begin{align*}
  x_{t_2+1}& =x_{ t_1+1}(1-E)^{t_2-t_1}\prod_{k=t_1+1-r}^{t_2-r}( 1 +\psi_{k } p(s_{k }))\\
  &\leq x_{ t_1+1}(1-E)^{t_2-t_1}(1+M)^{r-1}\prod_{k=t_1+1-r}^{t_2-r}( 1 +\varphi_{k } p(z_{k }))
  \\   &<x_{ t_1+1}(1+M)^{r-1}(1-\eta)^{t_2-t_1}
\end{align*}
which tends to zero as $t_2$ goes to infinity,  provided that $t_1$ is large enough. This contradicts the assumption that $\displaystyle\limsup_{t\to\infty}x_t>0$ and so completes the proof.
\end{proofext}

\subsection{Periodic case}

For the following proof we shall apply the Horn  fixed point theorem \cite[Theorem 6]{horn1970some}.  
\smallskip 


\begin{proofperiod}
According to the results for the general 
case, it only remains to prove that the 
persistence condition 
$\langle (1-E) (1+\varphi p(z))
\rangle>1$ implies the existence of 
at least one positive $\omega$-periodic solution. 
Indeed, observe that, due to Theorem \ref{atractividad}, such a solution attracts all the positive trajectories and, in particular, it is 
unique. Conversely, if there exists a positive 
$\omega$-periodic solution, then it is persistent 
and Theorem \ref{teoremapersistencia}
applies. 

Consider the Banach space
$$\mathcal{F}=\{ \phi:\{-r,-r+1,\dots,-1,0\}\to\R^2\}$$
equipped with the norm 
$$\|\varphi\|=\max_{-r\leq t\leq 0}\sqrt{\phi_1(t)^2+\phi_2(t)^2}. $$

\textit{Step 1.}  Let  $\delta>0$ be the persistence bound given by Theorem   \ref{teoremapersistencia} and set $R=3\overline S$, $\alpha= (1-E)^{-r}\delta/2$, $T^{in}=T^{in}(R,\alpha)$, and $R_0=z_0+6\overline S+3\overline Sp(3\overline S)r+\overline S$. We claim that  
$$\sqrt{s_t^2+x_t^2}\leq R_0$$
for all $t\geq 0$ if $\|(s^{in},x^{in})\|\leq 3\overline S$.
Indeed, recall the inequality   \eqref{cotaparaY} and notice that  
$$y_0\leq \|(s^{in},x^{in})\|p(\|(s^{in},x^{in})\|)r\leq 3\overline Sp(3\overline S)r.$$
By Lemma \ref{lemaclave}, it is deduced that
\begin{equation}\label{cotaSyx}
\begin{aligned}
  |(s_t,x_t)|&\leq s_t+x_t\\
  &\leq s_t+x_t+y_t\\
  &\leq |z_t-s_t-x_t-y_t|+z_t\\
  &\leq (1-E)^{-t}|z_0-s_0-x_0-y_0|+\overline S\\
  &\leq \max\{z_0,s_0+x_0+y_0\}+\overline S\\
  &\leq R_0.
\end{aligned}
\end{equation}

\textit{Step 2.} Define the sets
\begin{align*}
  s_0&=\{\phi\in \mathcal{F}:\|\phi\|\leq 2\overline S,\, \phi_2(0)\geq \delta\},\\
  s_1&=\{\phi\in \mathcal{F}:\|\phi\|< 3\overline S,\, \phi_2(0)> \delta/2\},\\
  s_2&=\{\phi\in \mathcal{F}:\|\phi\|\leq R_0,\, \phi_2(0)\geq (1-E)^{-T^{in}}\delta/2 \}.
\end{align*}

In order to apply Horn's Theorem, note that   $s_0\subset s_1\subset s_2\subset\mathcal{F}$ are convex bounded sets. Further, 
 $s_0$ and $s_2$ are closed and consequently compact, because $\mathcal F$ is finite dimensional.   
Moreover, observe that $s_1$ is open.  

Next, define the Poincar\'e Operator 
\begin{align*}
  P:s_2&\to \mathcal{F}\\
  \phi&\mapsto (t\mapsto (s_{t+\omega},x_{t+\omega})), \quad t=-r,\ldots, 0,
\end{align*}
where $(s_t,x_t)$ is the solution of system \eqref{modeloprincipal} with initial condition $\phi$. Then $P^k(\phi)_t$ is the solution for $k\omega-r\leq t\leq k\omega$ with the same initial condition $\phi$.

\textit{Step 3.} We claim that $P^k(s_1)\subset  s_2$ for all $k\geq 0$ and $P^k(s_1)\subset  s_0$ from some  $k\geq m$ with $m\in\N$. 
Regarding the first claim, observe that the initial condition $\phi$ is bounded from above  by   $3\overline S$, so  the solution is upperly bounded by $R_0$. Further, if $\phi_2(0)>\delta /2$, using  \eqref{expresionxt} it happens that
$$x_t\geq (1-E)^{-t}\delta/2>(1-E)^{-\tilde T}\delta/2$$
for all $t< \tilde T$ and  
$$x_t\geq \delta >(1-E)^{-\tilde T}\delta/2$$
for all $t\geq \tilde T$. This proves that $P^k(s_1)\subset s_2$ for all $k\geq 0$.

For the second claim, take $T\geq T^{in}$ such that 
$$R_0(1-E)^{-T}\leq \overline S.$$
Then, if the initial condition is  bounded from above by  $3\overline S$, similarly as in \eqref{cotaSyx}, we obtain
$$|(s_t,x_t)|\leq (1-E)^{-t}|z_0-s_0-x_0-y_0|+\overline S\leq 2\overline S$$
for $t\geq T$. And $x_t\geq \delta$ if $t\geq T$. Therefore, the claim is deduced  after setting  $m\geq (T+r)/\omega$.

Thus, all the assumptions of   Horn's Theorem are fulfilled, so we conclude that  
$P$ has at least one 
fixed point in $s_2$, which corresponds 
to a  positive $\omega$-periodic solution of the problem. 
\end{proofperiod}

\medskip

The following proof is based on   \cite[Theorem 1]{amster2020dynamics} and \cite[Theorem 2.2]{cartabia2025uniform}.
\smallskip

\begin{proofperext}
As before, it is observed that, due to  
Theorem \ref{extinciongeneral}, it only remains to analyze the case 
 $$\langle (1-E) (1+\varphi p(z))
\rangle = 1.$$
We claim that
\begin{equation}\label{contradiccionextincion}
    \prod_{k=t_0-r}^{t-r}\frac{1 +\psi_{k } p(s_{k })}{1+\varphi_kp(z_k)}
\end{equation} 
goes to zero when $t$ goes to infinity. By contradiction, suppose there it $\varepsilon>0$ such that, for all $t\geq t_0$ there exists $t_1>t$ such that
$$\prod_{k=t_0-r}^{t_1-r}\frac{1 +\psi_{k } p(s_{k })}{1+\varphi_kp(z_k)}>\varepsilon.$$
Note that by Lemma \ref{lemaextincion}
\begin{align*}
    \prod_{k=t_0-r}^{t-r}\frac{1 +\psi_{k } p(s_{k })}{1+\varphi_kp(z_k)}&=\left(\prod_{k=t_0-r}^{t_1-r}\frac{1 +\psi_{k } p(s_{k })}{1+\varphi_kp(z_k)}\right)\prod_{k=t+1-r}^{t_1-r}\frac{1+\varphi_kp(z_k)}{1 +\psi_{k } p(s_{k })}\\
    &>\varepsilon\frac{1+\varphi_kp(z_k)}{1 +\psi_{k } p(s_{k })}\\
    &\geq \varepsilon (1+M)^{1-r}
\end{align*}
Therefore,
 \begin{align*}
   x_{t+1}& =x_{ t_0 }(1-E)^{t+1-t_0}\prod_{k=t_0 -r}^{t-r}( 1 +\psi_{k } p(s_{k }))\\
  &=x_{ t_0 }\left(\prod_{l=t_0 -r}^{t-r}[1+\varphi_lp(z_l)](1 -E)\right)\prod_{k=t_0 -r}^{t-r}\frac{1 +\psi_{k } p(s_{k })}{1+\varphi_kp(z_k)} \\
  &> x_{ t_0 }\left(\min_{l\in [0,\omega]}[1+\varphi_lp(z_l)](1 -E)\right)^\omega \varepsilon (1+M)^{1-r}.
 \end{align*}
 This implies that 
 $\displaystyle\liminf_{t\to\infty}x_t>0$. By Theorem \ref{teoremapersistencia} System \eqref{modeloprincipal} is persistent, so a contradiction yields. Thus, \eqref{contradiccionextincion} is proved.

Finally, it is verified that
 \begin{align*}
   x_{t+1}& =x_{ t_0 }(1-E)^{t+1-t_0}\prod_{k=t_0 -r}^{t-r}( 1 +\psi_{k } p(s_{k }))\\
  &=x_{ t_0 }\left(\prod_{l=t_0 -r}^{t-r}[1+\varphi_lp(z_l)](1 -E)\right)\prod_{k=t_0 -r}^{t-r}\frac{1 +\psi_{k } p(s_{k })}{1+\varphi_kp(z_k)} \\
  &\leq  x_{ t_0 }\left(\max_{l\in [0,\omega]}[1+\varphi_lp(z_l)](1 -E)\right)^\omega\prod_{k=t_0 -r}^{t-r}\frac{1 +\psi_{k } p(s_{k })}{1+\varphi_kp(z_k)} 
 \end{align*}
 goes to zero when $t$ goes to infinity and the proof is complete.
\end{proofperext}

\section{Numerical simulations}\label{simulaciones}

In this section, we 
present some  some numerical simulations to illustrate our results. We have implemented them in GNU Octave \cite{octave}. Further, in the left image of Figure \ref{figura1} and in the two images of Figure \ref{figura2} we plot $s^0$ with a black dashed line, $s$ with a blue dotted line, and $x$ with a red solid line.

In the first simulation, in the left image of Figure \ref{figura1} we set the function $s^0$ to be initially constant and then linearly decreasing. In the  image on the right, we analyze condition \eqref{condicionpersistencia}: for each time value $t$ we plot the quantity
$$\prod_{k=\lfloor t/2\rfloor}^t\left(\left[ 1+\varphi_{k-r}p(z_{k-r})\right](1-E)\right)$$
with a solid blue  line. We remark that as long as $s^0$ is constant, the concentration of biomass persists and this function is larger than the threshold value $1$, represented by the dashed red  line. However, this function decreases when $s^0$ decreases and the concentration of biomass goes to zero.

\begin{figure}[htp]
\begin{center}
  \begin{minipage}{0.49\textwidth}
  \centering
	  \includegraphics[width=2in]{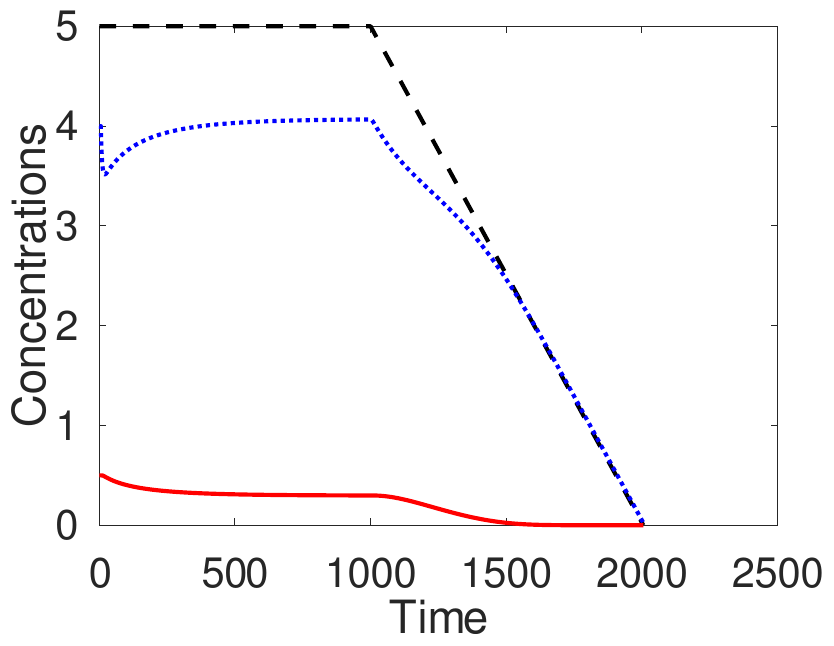}
	\end{minipage}
	\begin{minipage}{0.49\textwidth}
	\centering
	   \includegraphics[width=2in]{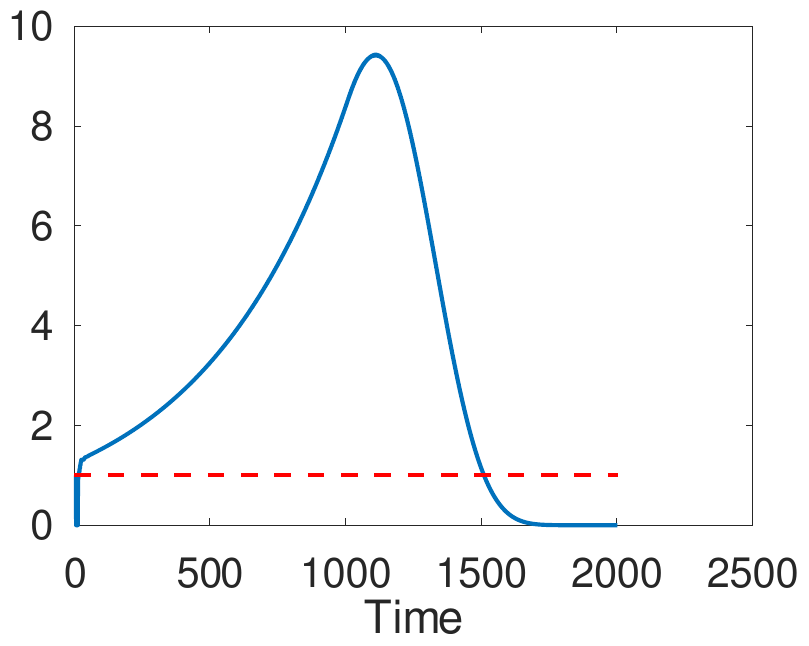}
	\end{minipage}	
	\caption{Numerical simulation of system \eqref{modeloprincipal} with $r=5$, $E=1/5.5$, $p(x)=x(x+1)^{-1}$, and constant initial conditions  $s_0\equiv 1/4$ and $x_0\equiv 1/2$.}\label{figura1}	
\end{center}
\end{figure}

In Figure \ref{figura2}  the periodic case is considered. For each time value $t$ we set
$$s^0_t=\sin(2\pi t/500)/4+a$$
with $a=0.6$ in the  image on the left and $a=0.3$ in the image on the right. Furthermore, the average defined in the last statement of Theorem \ref{existenciadesolucion} is computed as $1.0217$ and $0.9756$, respectively. This is consistent with the existence of an attractive periodic positive solution of $x$ in the first case and the extinction of the biomass in the second one.

\begin{figure}[htp]
\begin{center}
  \begin{minipage}{0.49\textwidth}
  \centering
	  \includegraphics[width=2in]{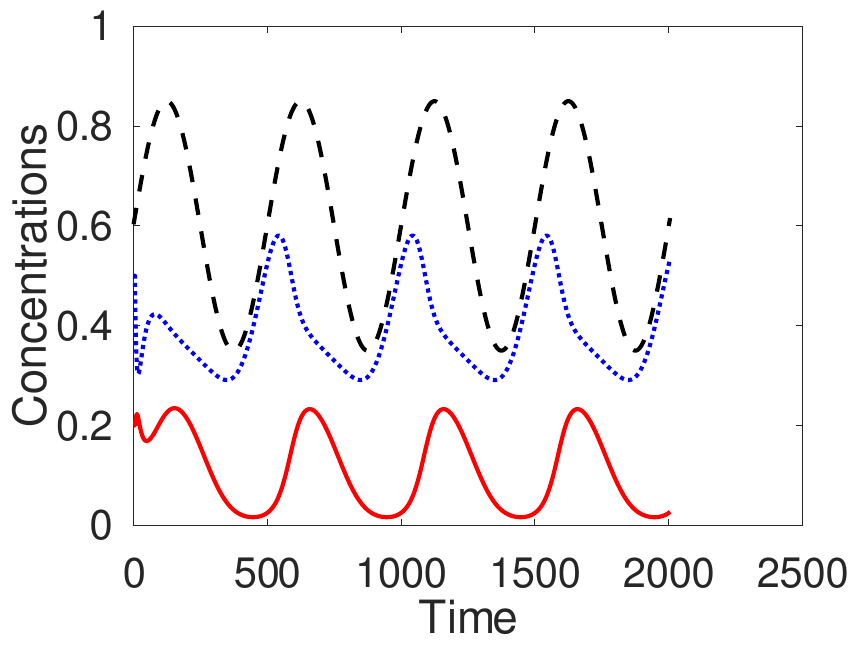}
	\end{minipage}
	\begin{minipage}{0.49\textwidth}
	\centering
	   \includegraphics[width=2in]{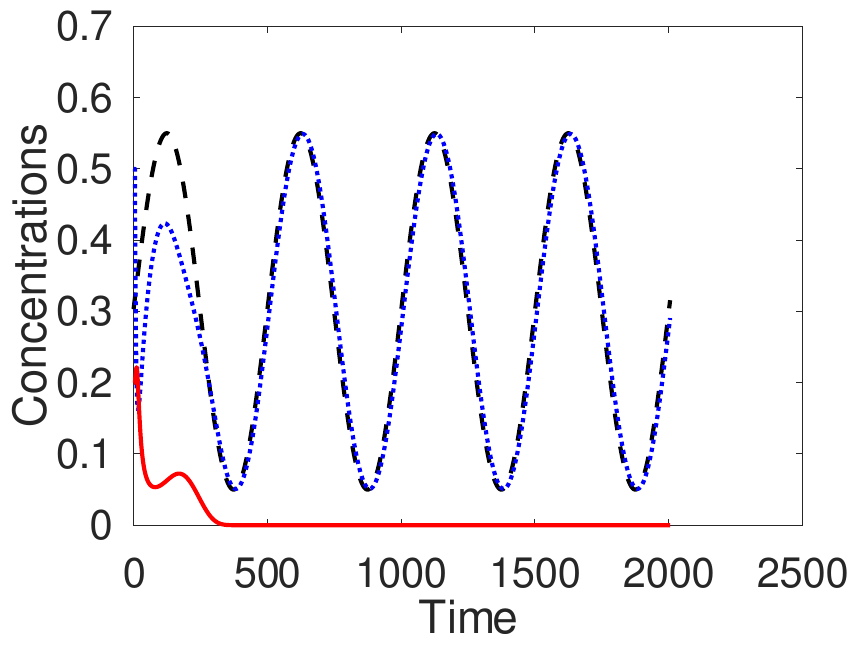}
	\end{minipage}	
	\caption{Numerical simulation of the periodic case with $r=5$, $E=1/8$, $p(x)=x(x+1)^{-1}$, and initial conditions  $s\equiv 1/2$ and $x\equiv 1/5$.
    }
    \label{figura2}	
\end{center}
\end{figure}

\section{Conclusions}

The dynamics of bioreactors remain a focal point in mathematical biology, both for their broad applications and the unresolved theoretical challenges they still present. 
The results in the present paper aim to build a bridge between  theoretical models  and  its computational aspects, offering tools to refine bioreactor designs and interpret numerical simulations. 

Specifically, a non-autonomous discrete delayed system modeling a one-species chemostat was studied, inspired by the continuous-time framework proposed in the well established paper \cite{ellermeyer1994competition}.
Conditions for the persistence or extinction of the solutions are established, 
characterized in terms of the lower and upper Bohl exponents of an associated scalar linear equation. To our knowledge, this approach constitutes a novelty  for discrete delayed chemostat systems and extends previous results in the literature in two directions. On the one hand, the connection with the Bohl exponents 
was not explored in the continuous case {and}, furthermore, the uniform persistence of the system was deduced from the existence of a single persistent solution. On the other hand, unlike the delayed system 
considered in \cite{AR24}, in the Ellermeyer model the results from the continuous case can be fully retrieved without imposing extra assumptions.

\section*{Acknowledgements}
The authors want to express their gratitude to the anonymous reviewers for the careful reading of the manuscript and the fruitful suggestions and comments. 
This work was partially supported by projects UBACyT 20020190100039BA and  PIP 11220200100175CO, CONICET.
 

\bibliographystyle{plain}

\end{document}